\documentclass[a4paper, twoside, 11pt]{amsart}

\pdfoutput=1 

\usepackage[T1]{fontenc}
\usepackage[utf8]{inputenc}
\usepackage[english]{babel}

\hyphenation{McKinnon}
\hyphenation{ar-chi-me-dean}

\usepackage{lmodern}
\usepackage{amssymb,amsmath,amsthm,amsfonts,mathtools}
\usepackage{dsfont}
\usepackage[hidelinks]{hyperref}
\usepackage{natbib}

\RequirePackage{mathrsfs}

\usepackage{enumitem}

\usepackage[font=small, margin=40pt,figureposition=below]{caption}

\usepackage{tikz-cd}

\newtheorem{theorem}{Theorem}[section]
\newtheorem*{theorem*}{Theorem}
\newtheorem{corollary}[theorem]{Corollary}
\newtheorem{lemma}[theorem]{Lemma}
\newtheorem{proposition}[theorem]{Proposition}
\theoremstyle{definition}
\newtheorem{definition}[theorem]{Definition}
\newtheorem{example}[theorem]{Example}
\newtheorem{remark}[theorem]{Remark}
\newtheorem{conjecture}[theorem]{Conjecture}
\newtheorem{question}[theorem]{Question}

\setlist[enumerate, 1]{label = (\roman*), ref = \roman*}

\newcommand{\CA}{{\mathcal{A}}}
\newcommand{\CO}{{\mathcal{O}}}
\newcommand{\CR}{{\mathcal{R}}}
\newcommand{\cA}{\mathcal{A}}
\newcommand{\cC}{\mathcal{C}}
\newcommand{\cO}{\mathcal{O}}
\newcommand{\cP}{\mathcal{P}}
\newcommand{\cI}{\mathcal{I}}

\newcommand{\bc}{\boldsymbol{c}}
\newcommand{\bx}{{\boldsymbol{x}}}
\newcommand{\RA}{{\mathbf{A}}}

\newcommand{\bX}{{\boldsymbol{X}}}

\newcommand{\mC}{\mathfrak{C}}
\newcommand{\mD}{\mathfrak{D}}

\newcommand{\mT}{\mathfrak{T}}
\newcommand{\mU}{\mathfrak{U}}
\newcommand{\mV}{\mathfrak{V}}

\newcommand{\mX}{\mathfrak{X}}
\newcommand{\mY}{\mathfrak{Y}}
\newcommand{\fo}{\mathfrak{o}}
\newcommand{\fO}{\mathfrak{O}}
\newcommand{\fp}{\mathfrak{p}}

\newcommand{\dA}{\mathbb{A}}
\newcommand{\CC}{\mathbb{C}}
\newcommand{\FF}{\mathbb{F}}
\newcommand{\GG}{\mathbb{G}}
\newcommand{\NN}{\mathbb{N}}
\newcommand{\PP}{\mathbb{P}}
\newcommand{\QQ}{\mathbb{Q}}
\newcommand{\RR}{\mathbb{R}}
\newcommand{\ZZ}{\mathbb{Z}}

\newcommand{\Gm}{\mathbb{G}_\mathrm{m}}

\newcommand{\Gmok}{\mathbb{G}_{\mathrm{m},\mathfrak{o}}}
\newcommand{\GmO}{\mathbb{G}_{\mathrm{m},\mathfrak{O}}}

\DeclareMathOperator{\cl}{cl}
\DeclareMathOperator{\Frac}{Frac}
\DeclareMathOperator{\Gal}{Gal}
\DeclareMathOperator{\im}{im}
\DeclareMathOperator{\mult}{mult}
\DeclareMathOperator{\Pic}{Pic}
\DeclareMathOperator{\rk}{rk}
\DeclareMathOperator{\Spec}{Spec}

\newcommand{\norm}[1]{\left\lVert#1\right\rVert}

\newcommand{\red}{\mathrm{red}}
\newcommand{\et}{\textrm{ét}}
\newcommand{\NS}{\mathrm{NS}}

\setcounter{tocdepth}{1}

\begin{document}
	\title{Integral Diophantine approximation on varieties}
	\author{Zhizhong Huang}
	\address{State Key Laboratory of Mathematical Sciences, Academy of Mathematics and Systems Science, Chinese Academy of Sciences, Beijing 100190, China}
	\email{zhizhong.huang@yahoo.com}

	\author{Florian Wilsch}
	\address{Mathematisches Institut, Georg-August-Universität Göttingen, Bunsenstr.\ 3--5, 37073 Göttingen, Germany}
	\email{florian.wilsch@mathematik.uni-goettingen.de}

	\date{May 7, 2026}
	\subjclass[2020]{14G05; 14G40, 11J99} 
	
	\begin{abstract}
	We study the local behavior of integral points on log pairs near a fixed rational point in the boundary by means of an integral approximation constant.
    In light of Siegel's theorem about integral points on curves and McKinnon's conjecture on rational approximation constants, we conjecture that integral points that are close to the fixed point in archimedean topology should lie on certain rational curves with at most two points at infinity on weakly log Fano varieties. We verify this conjecture for a number of examples.
	\end{abstract}
	
	\maketitle
	
	\tableofcontents
	
\section{Introduction}
Solving polynomial equations with integer coefficients by integers lies at the heart of Diophantine geometry. This problem comes in many qualitative and quantitative variants about the existence, number, and distribution of solutions. The goal of this article is to address the following local aspect: how well can one approximate a given point by integral solutions, that is, how quickly can the distance between integral points and a fixed point decrease while keeping their ``complexity'' as small as possible?

\subsection{Approximation constants}
On projective varieties, rational and integral points coincide, and as the rationals are dense in the reals (and more generally, a number field is dense in any of its completions), rational points lend themselves readily to approximation problems; they are thus the original setting for \emph{Diophantine approximation}, asking about rational numbers close to real points. Concretely, given a real number $x$, what is the supremum $\alpha$ of exponents $a>0$ such that
\begin{equation*}
	\left|x-\frac{p}{q}\right| \le \frac{1}{|q|^a}
\end{equation*}
has infinitely many solutions with coprime $(p,q)\in \ZZ^2$? If $x\in \QQ$, then it is easy to see that $\alpha=1$. For irrational algebraic $x$, this question has been famously answered by K.~Roth \cite{Roth}: $\alpha =2$. 

McKinnon~\cite{McK} has proposed a geometric variant of this problem, which he studied together with M.~Roth~\cite{McK-Roth}:
instead of an algebraic number, $x$ becomes a geometric point on a projective variety $X$ over a number field $k$; the rational number $p/q$ becomes a rational point $y$ on $X$, the difference $|x-p/q|$ becomes an appropriately constructed distance $d_v(x,y)$ for a fixed place $v$, and $|q|$ becomes the height $H_L(y)$, where $H_L$ is a height function associated with an ample line bundle $L$ on $X$, which is a measure of the ``complexity'' of rational points. Given such $x\in X(\overline{k})$ and an ample line bundle $L$, the \emph{approximation constant} $\alpha_v(x;L)$ is the infimum of positive real numbers $a$ such that
\begin{equation*}
	d_v(x,y)^a \ll \frac{1}{H_L(y)}
\end{equation*}
has infinitely many solutions with $y\in X(k)$; it measures how well $x$ can be approximated by rational points. Putting the exponent on the distance makes this homogeneous in the line bundle $L$ among other similarities with \emph{Seshadri constants} \cite[\S3]{McK-Roth}, and a smaller approximation constant means that a point is easier to approximate.
K.~Roth's Theorem can then be interpreted as giving the answer for $X=\PP^1$: it is $1$ for rational points, and $1/2$ for algebraic points defined over $\overline{k}\cap k_v$.
McKinnon put forward a conjecture \cite[Conj.~2.7]{McK} on these constants, predicting that whenever $x\in X(k)$ and the approximation constant $\alpha_v(x;L)$ is finite,
a sequence of ``best approximations''~\cite[Def.~2.5]{McK} can be found along a rational curve $C$;
that is, there is a sequence $(y_n)_{n\in \NN}\subseteq C(k)$ with $d_v(x,y_n)\to 0$ and for any $\varepsilon>0$,
\begin{equation*}
	\frac{1}{H_L(y_n)^{1+\epsilon}} \ll_\epsilon d_v(x,y_n)^{\alpha_v(x;L)} \ll \frac{1}{H_L(y_n)^{1-\varepsilon}}.
\end{equation*}
Evidence for this conjecture is underpinned by the fact~\cite[\S\,7.3~2nd Thm, \S\,7.4]{serre-mordell} going back to Siegel~\cite{Siegel} that the approximation constant along any curve of positive genus is infinite.

\subsection{Integral points and approximations} In the context of the classical variant of \emph{Manin's problem} on rational points on Fano varieties $X$~\cite{MR974910}, Peyre's equidistribution principle~\cite{Peyre} involves an (absolutely continuous) \emph{Tamagawa measure} on $X(k_v)$ for each place $v$, with respect to which the set $X(k)$ is equidistributed. In particular, there is an abundance of rational points that can approximate other points whenever this principle holds.
On the other hand, integral points on an affine variety $U\subseteq \dA_\ZZ^n$ lie within the integer lattice $\ZZ^n$, and hence clearly cannot approximate any point in $U$ with respect to the real distance.
Work on the integral variant of Manin's problem, as initiated by Chambert-Loir and Tschinkel \cite{C-L-Tsch} asking about the distribution of integral points of bounded height on a log Fano pair $(X,D)$, suggests that this is the wrong point of view, however, and predicts instead an equidistribution against a certain \emph{residue measure} supported on the boundary $D(k_v)$ as soon as $v$ is archimedean. As a basic example, one may regard the lattice $\ZZ^n=\dA^n(\ZZ)$ inside $\PP^n(\RR)$; this way, it ceases to be discrete, having accumulation points on the hyperplane $H\cong\PP^{n-1}$ at infinity and in fact becomes equidistributed with respect to a measure supported on $H$.
It is thus natural to investigate how well points on the boundary $D$ can be approximated by integral points.
We shall study this question by defining an integral variant of McKinnon's approximation constant. We conjecture that this constant is still governed by rational curves under suitable hypotheses (Conjecture~\ref{conj:integralmckinnon}).

A necessary condition for a rational curve to contribute to the (integral) approximation constant is that it contains infinitely many (integral) points. Siegel's classical theorem translates this into geometric properties of the curve.

\begin{theorem}[Siegel (1929) \cite{Siegel}]\label{thm:Siegel}
	Let $C$ be a smooth, proper curve over a number field $k$ with ring of integers $\fo$, and let $D\subset C$ be a zero-dimensional reduced subscheme. 
	Then there are only finitely many integral points on any $\fo$-model of $C\setminus D$, except possibly if $g(C)=0$ and $\deg(D)\le 2$.
\end{theorem}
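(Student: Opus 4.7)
The plan is to derive Siegel's finiteness from two classical inputs: the Mordell--Weil theorem for the Jacobian of $C$, and Roth's theorem on Diophantine approximation of algebraic numbers. Assume for contradiction that an $\fo$-model $\mC$ of $C\setminus D$ carries an infinite set of integral points $P_1,P_2,\ldots\in \mC(\fo)$. By Northcott's theorem $H(P_n)\to\infty$, and because integrality controls the non-archimedean local heights, some archimedean place $v$ satisfies $H_v(P_n)\to\infty$; extracting a $v$-adic convergent subsequence then forces the limit to lie in $D(k_v)$. Replacing $k$ by a finite extension preserves both the hypotheses and finiteness, so we may assume $P_n\to Q$ with $Q\in D(k)$ and all of $D$ rational over $k$.

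The case $g(C)=0$ with $\deg D\geq 3$ reduces to the positive-genus case: select three points of $D$ and build a Kummer cover $\pi\colon C'\to C$ totally ramified of order $m$ over them. Riemann--Hurwitz gives $g(C')\to\infty$ with $m$, and by a Chevalley--Weil type argument, after a further finite extension the integral points $P_n$ lift to integral points on an appropriate model of $C'\setminus\pi^{-1}(D)$. Hence it suffices to treat $g(C)\geq 1$.

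Assume $g=g(C)\geq 1$ and fix an Abel--Jacobi embedding $\iota\colon C\hookrightarrow J=\mathrm{Jac}(C)$ with $\iota(Q)=0$. By Mordell--Weil, $J(k)$ is finitely generated; for an integer $m\geq 2$, extract a subsequence along which the $\iota(P_n)$ share a common class in $J(k)/mJ(k)$, so $\iota(P_n)=[m]\xi_n+T$ for a fixed $T\in J(k)$ and varying $\xi_n$. Pulling back $\iota(C)$ along the isogeny $[m]\colon J\to J$ (shifted by a preimage of $T$) yields an \'etale cover $\pi'\colon C'\to C$ of degree $m^{2g}$ to which the $P_n$ lift to $P_n'\in C'(k)$ after one further finite extension, by Chevalley--Weil applied to $\pi'$. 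Two estimates are crucial: \'etaleness of $\pi'$ at a chosen preimage $Q'$ of $Q$ gives $d_v(P_n,Q)\asymp d_v(P_n',Q')$, while the quadratic growth of the canonical height on $J$ under $[m]$, transported back to $C$ via $\iota$, gives $H(P_n)\asymp H(P_n')^{m^2}$ up to bounded factors.

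The contradiction is now a direct application of Roth's theorem. Integrality forces $d_v(P_n,Q)\ll H(P_n)^{-\alpha}$ along a subsequence for some $\alpha>0$ via the product formula, since non-archimedean distances from integral points to $D$ are bounded below, so the archimedean ones must decay at least polynomially in the height. The two estimates then translate this into $d_v(P_n',Q')\ll H(P_n')^{-\alpha m^2}$; evaluating a rational function $f\in k(C')$ with a pole only at $Q'$---which exists by Riemann--Roch---produces approximations to the algebraic number $f(Q')$ on $\PP^1$ whose exponent grows linearly in $m^2$, and for $m$ large this contradicts Roth's bound of $2+\varepsilon$. The principal obstacle is the precise height comparison $H(P_n)\asymp H(P_n')^{m^2}$: it rests on the quadratic behaviour of the N\'eron--Tate height under $[m]$, its compatibility with Weil heights on $C$ and $C'$ via the projection formula, and a careful verification that Chevalley--Weil indeed produces integral lifts for the \'etale cover $\pi'$.
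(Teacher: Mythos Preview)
The paper does not supply a proof of this statement: Theorem~\ref{thm:Siegel} is quoted as a classical result and attributed to Siegel with a citation, so there is no ``paper's own proof'' to compare against. Your proposal is a recognisable sketch of the standard Mordell--Weil/Roth proof (as in Serre's \emph{Lectures on the Mordell--Weil Theorem} or Hindry--Silverman), and the overall architecture is sound.

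A few points nonetheless deserve tightening. First, invoking Chevalley--Weil for the \'etale cover $\pi'$ is superfluous and slightly obscures the mechanism: the whole purpose of the Mordell--Weil step is that writing $\iota(P_n)=m\xi_n+T$ with $\xi_n\in J(k)$ already produces $k$-rational lifts $P_n'=\xi_n$ on a component $C'$ of $[m]^{-1}(\iota(C)-T)$; no field extension is needed, and this is precisely what makes the boosting work uniformly. Second, the comparison $H(P_n)\asymp H(P_n')^{m^2}$ is only correct if both heights are taken to be the restriction of the N\'eron--Tate height on $J$; with respect to degree-one heights on $C$ and $C'$ the relation is $h(P_n)\asymp \delta\, h'(P_n')$ where $\delta=\deg\pi'$, so you should be explicit about which normalisation is in play when you later push down to $\PP^1$. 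Third, if $f$ has a pole only at $Q'$ then $f(Q')=\infty$, which is a rational point of $\PP^1$ rather than an irrational algebraic number; the contradiction still follows (indeed Liouville's elementary exponent $1$ suffices and is easier than Roth here), but the sentence ``approximations to the algebraic number $f(Q')$'' should be rephrased accordingly.
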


An important step in our investigation is to make the converse of Siegel's theorem explicit. Concretely, assume $g(C)=0$ and write $C_0=C\setminus D$. If $\deg(D)\le 1$ (thus $C_0$ is \emph{log rational}) and $C_0$ contains at least one integral point, then $C_0$ admits an integral parametrization from $\dA^1$. If $\deg(D)=2$, then $C_0$ admits a parametrization by a torus. For this reason we call such curves \emph{toroidal}. Integral points on such curves are dominated by the units $\fo_K^\times$ of the smallest extension $K/k$ over which both points in $D$ are defined. In fact, if $C_0$ contains an integral point, it contains a subgroup of $\fo_K^\times$ of full rank. See Propositions~\ref{prop:a1para}, \ref{prop:torus-para-integral}, and Corollary~\ref{cor:torus-infinite-numberfield} for precise statements.

Based on these parametrizations of suitable subsets of integral points on rational curves, we are able to fully determine their approximation constants (Propositions~\ref{prop:approximation-log-rational} and~\ref{prop:approximation-toroidal}).

\subsection{Structure of the article} We begin by introducing the notion of \emph{log schemes} over a general Dedekind domain and studying the structure of rational curves that potentially possess infinitely many integral points in Section~\ref{sec:schemes-and-curves}.
After defining integral approximation constants in Section~\ref{se:intappcst}, in Section~\ref{se:ratcurve} we compute their values on all types of curves studied in Section~\ref{sec:schemes-and-curves}: \emph{log rational} and \emph{toroidal} ones; we propose a conjecture predicting that the best approximations should be achievable on such rational curves.
In Section~\ref{se:ex}, we study an explicit example in detail and show that the best approximations in a Zariski open subset can be simultaneously found in three different types of rational curves. Finally, in Section~\ref{se:toric} we verify our conjecture for a family of toric varieties. 

\subsection{Notation and conventions}
For a number field $k$, let $\fo_k$ be its ring of integers. Moreover, denote by $\Omega_k$ the set of all places of $k$, and for each place $v\in \Omega_k$, denote by $k_v$ the completion of $k$ at $v$. Let $\infty_k\subset \Omega_k$ be the set of archimedean places. If $v\in \Omega_k\setminus \infty_k$ is a finite place, denote by $\fo_v\subset k_v$ its ring of $v$-adic integers, by $\pi_v\in \fo_v$ a uniformizer, and by $\FF_v = \fo_v/(\pi_v)$ its residue field. For a place $v$, let $|\cdot|_v\colon k_v\to \RR_{\ge 0}$ be the $v$-adic absolute value normalized in the following way: $|p|_p = 1/p$ for primes $p\in \QQ$, $|\cdot|_\infty$ is the usual real absolute value on $\RR= \QQ_\infty$, and if $v\mid w$ for $w\in \Omega_\QQ$, then
\begin{equation*}
	|\cdot|_v = |\cdot|_w^{[k_v:\QQ_w]}.
\end{equation*}
For $\bx\in k_v^n$, let $\norm{\bx}_v$ be the maximum norm.

Write $\RA_k$ for the ring of adèles, and for a finite set $S$ of places, write $\RA^{S}_k = \prod_{v\in \Omega_k\setminus S}k_v \cap \RA_k$ for the ring of adèles off $S$; in particular, write $\RA^\infty_k = \RA^{\infty_k}_k$ for the ring of finite adèles and $\RA^{v}_k$ for the adèles off a single place $v\in \Omega_k$.
Write $k_S = \prod_{v\in S} k_v$, so that $\RA_k=\RA_k^S \times k_S$.

A $k$-variety is a separated scheme of finite type over $k$. For a $k$-variety $X$, the space $X(\RA_k)$ of adelic points is the restricted product of all its local points $X(k_v)$ against $v$-adic integral points $\mX(\fo_v)$, $\mX$ being any $\fo$-model. We also write $X(k_S)=\prod_{v\in S} X(k_v)$ if $S$ is a finite set of places. For any closed point $x\in X$, we write $k(x)$ for the residue field.
For any $k$-scheme $Y$, we write $Y_\red$ for its reduced subscheme.

We shall parametrize integral points over more general bases: Dedekind rings (which should be understood to include fields); throughout Section~\ref{sec:schemes-and-curves}, $\fo$ denotes a Dedekind ring and $k$ its field of fractions. From Section~\ref{se:intappcst} on, $k$ denotes a fixed number field and $\fo=\fo_k$ its ring of integers.

Throughout, we use Vinogradov's notation: for expressions $A$ and $B$ and a variable $x$, the notation $A \ll_x B$ means that there is a constant $C(x)$, which may depend on $x$, such that $A\le C(x)B$. The variant $\gg_x$ is defined symmetrically, and $A\asymp_x B$ means $A\ll_x B$ and $B\ll_x A$. Big-$O$-notation is to be read with the same meaning: $A=O_x(B)$ means $A\ll_x B$. 
We moreover adopt the convention that all ambient varieties, basepoints, and height functions are treated as fixed and all implied constants may depend on them.

\subsection*{Acknowledgments}
This project was initiated when both authors were at Leibniz Universität Hannover. We would like to express our gratitude to Ulrich Derenthal for his support. Further progress was made during a visit of Z. H. to Georg-August-Universität Göttingen. We thank Damaris Schindler for offering hospitality. We are very grateful to the anonymous referee for careful reading of the manuscript and suggestions for improvements. Z. H. acknowledges the organizers of the conference ``Diophantine Days 2025'' held at Westlake University where a preliminary version of this work was reported, and thanks Brian Lehmann, Chunhui Liu and Amos Turchet for their interests and helpful discussions. 

\subsection*{Funding}
Z.~H.\ was supported by National Key R\&D Program of China No.\ 2024YFA1014600. F.~W.\ was supported by Deutsche Forschungsgemeinschaft (DFG) grant 398436923 (GRK2491).

\section{Log schemes and integral points on curves}\label{sec:schemes-and-curves}
Throughout this section, let $\fo$ denote a Dedekind ring and $k=\Frac(\fo)$ its field of fractions.
\subsection{Log schemes and integral points}
Let $\mU$ be a non-proper scheme that is embedded in a projective scheme $\mX$, all over $\fo$.
By blowing up the part of  the complement $\mD = \mX\setminus \mU$  that is of codimension 2 or greater, we may assume that $\mD$ is a divisor, in which case we call $(\mX,\mD)$ a \emph{log pair}. 
 An \emph{integral point} on the pair $(\mX,\mD)$ (or on the open part $\mU$) is a morphism $\Spec \fo \to \mU$. The set of integral points is denoted by $\mU(\fo)$. It follows from the definition that a rational point in $\mX(k)$ is integral if and only if the unique integral point in $\mX(\fo)$ that it extends to does not specialize to a point on the divisor $\mD$ modulo any finite place. We call an integral point \emph{generically regular} if the image of the generic point of $\Spec \fo$ is regular --- that is, if it is regular as a rational point.
\begin{definition}
	A \emph{log scheme} over $\fo$ is a pair $(\mX, \mD)$ consisting of a separated $\fo$-scheme $\mX$ of finite type and a reduced effective divisor $\mD$.
	A log scheme $(\mX,\mD)$ is called
	\begin{enumerate}
		\item \emph{flat} if both $\mX$ and $\mD$ are,
		\item \emph{projective} if $\mX$ (hence $\mD$) is, and 
		\item \emph{generically regular} if the generic fiber $X = \mX_k$ is regular and the generic fiber $D = \mD_k$ of the divisor geometrically has strict normal crossings.
	\end{enumerate}
	As a useful shorthand, a \emph{nice log scheme} is a flat, projective, and generically regular log scheme.

	Moreover, for any log scheme $(\mX, \mD)$, we write $\mX_0 = \mX\setminus \mD$ (whenever the divisor $\mD$ is clear from context).
\end{definition}

Note that $\mD$ being flat means that it is the closure of its generic fiber in $\mX$. Moreover, if $(\mX,\mD)$ is a log scheme over $\fo$, then the generic fibers form a log scheme $(X,D)$ over $k$, with $X\setminus D$ denoted by $X_0$ whenever the divisor is clear from context.

\subsection{Rational curves with infinitely many integral points}\label{se:lograttoridal}

Analogously to McKinnon's rational approximation constant, we expect that the integral approximation constant we are about to define is obtained by points on rational, possibly singular, curves. We shall thus turn our attention to curves in this more general setting for now.
As curves whose genus is at least one can never contribute to approximation constants as their set of integral points is finite by Siegel's theorem, we need only consider rational curves.
On the other hand, any such curve that contains infinitely many integral points could, prima facie, possibly achieve the approximation constant. Our first aim is thus to determine such curves.

Recall that a $k$-rational curve on a projective $k$-variety $X$ is (the image of) a non-constant morphism $\PP^1_k\to X$. If the target carries a log structure, it induces one on the curve.
\begin{definition}
	Let $(\mX,\mD)$ be a log scheme over $\fo$ with generic fiber $(X,D)$ and $\phi\colon \PP^1_k\to X$ a rational curve. In this case, we
	\begin{enumerate}
		\item call $C=\im(\phi)$ a rational curve on $X$, equipped with a log structure $(C,D\cap C)$, and
		\item similarly, if $\mC$ is the Zariski closure of $C$, equip it with the log structure $(\mC,\mD\cap \mC)$ and call it a rational curve on $(\mX,\mD)$.
	\end{enumerate}
	Analogously to before, we write $C_0 = C\setminus D$ and $\mC_0 = \mC \setminus \mD$ whenever $\mD$ is clear from context.
\end{definition}

Let $C$ be a rational curve over $k$ and $C_0 = C\setminus D$ be an open subset.
It follows from Siegel's Theorem (Theorem~\ref{thm:Siegel}) that if $k$ is a number field and $D$ 
has at least three geometric points, then it contains only finitely many integral points; this leaves three cases: $\deg(D_\red)\in \{0,1,2\}$.

We begin with the former two cases: that of log rational curves, close analogs to rational curves on proper varieties.
\begin{definition}
	A \emph{log rational curve} over $\fo$ is a flat, projective log scheme $(\mC,\mD)$ with generic fiber $(C,D)$ such that
	\begin{enumerate}
		\item $\mC$ is birational to $\PP^1_\fo$, and
		\item $\deg(\phi^{-1}(D))\le 1$, where $\phi\colon \PP_k^1\to C$ is the normalization.
	\end{enumerate}
	Note that if $(\mC,\mD)$ is a log rational curve over $\fo$, then its generic fiber $(C,D)$ is a log rational curve over $k$.
\end{definition}

Log rational curves that can contribute to an approximation constant for a boundary point need to contain that point, hence meet the boundary in precisely one point. This type of log rational curve is dubbed \emph{$\dA^1$-curve} by Chen and Zhu~\cite[Def.~1.11]{Chen-Zhu}.

It turns out such a curve contains infinitely many integral points as soon as it contains at least one (if $k$ is a number field, by strong approximation for $\dA^1$ away from the infinite places, this happens as soon as it has integral points everywhere locally). In fact, a subfamily of integral points is parametrized by the affine line in this case.

\begin{proposition}\label{prop:a1para}
	Let $(\mC,\mD)$ be a log rational curve over $\fo$.
	If $\mC_0(\fo)$ contains a generically regular point, then there is a birational morphism $\phi\colon \dA_{\fo}^1\to \mC_0$.
\end{proposition}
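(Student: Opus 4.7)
The plan is to first parametrize $C_0$ on the generic fiber using the normalization of $C$, then extend to an $\fo$-morphism via a coordinate rescaling anchored at the integral point $\sigma_0$.

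I will begin on the generic fiber. Let $\phi\colon\PP^1_k\to C$ be the normalization. Since $\deg\phi^{-1}(D)\le 1$, I can choose a $k$-rational point $\tilde p_\infty\in\PP^1(k)$ with $\phi^{-1}(D)\subseteq\{\tilde p_\infty\}$; the restriction $\phi|_{\dA^1_k}\colon\dA^1_k = \PP^1_k\setminus\{\tilde p_\infty\}\to C_0$ is then a birational morphism. Generic regularity of $\sigma_0$ means that its generic fiber $\sigma_{0,k}\in C_0(k)$ is a regular point of $C$, at which $\phi$ is an isomorphism. Thus $\sigma_{0,k}$ has a unique preimage $\tilde\sigma_{0,k}\in\dA^1(k)$; after translating coordinates I may assume $\tilde\sigma_{0,k} = 0$, so that the resulting birational morphism $\phi_k\colon\dA^1_k\to C_0$ sends $0$ to $\sigma_{0,k}$.

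Next, I will extend $\phi_k$ to an $\fo$-morphism $\dA^1_\fo\to\mC_0$. Fix a projective embedding $\mC\hookrightarrow\PP^N_\fo$ and write $\phi_k = [f_0\colon\cdots\colon f_N]$ with $f_i\in k[t]$; after scaling by a factor in $k^\times$, the $f_i$ lie in $\fo[t]$. Since $\phi_k(0) = \sigma_{0,k}$ lifts to the integral point $\sigma_0\in\mC(\fo)\hookrightarrow\PP^N(\fo)$, the constant terms $(f_0(0),\ldots,f_N(0))$ generate the unit ideal of $\fo$. Call a prime $\fp\subset\fo$ \emph{bad} if either (i) the tuple $(f_0,\ldots,f_N)$ fails to generate $\fo[t]$ at $\fp$, or (ii) the naive extension at $\fp$ lands in $\mD_\fp$ on the special fiber; in either case the defect is a closed subscheme of $\Spec\fo$ with empty generic fiber, so only finitely many primes are bad. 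For each bad prime $\fp$, I precompose $\phi_k$ with the rescaling $t\mapsto \pi_\fp t$, which fixes $0$ and hence preserves the anchoring at $\sigma_0$; under this rescaling, the special fiber at $\fp$ becomes the constant map $\dA^1_{\FF_\fp}\to\mC_{\FF_\fp}$ with image $\sigma_0\bmod\fp\in\mC_0(\FF_\fp)$. Combining the rescalings at all bad primes into a single element $\lambda\in\fo\setminus\{0\}$ with $v_\fp(\lambda)\ge 1$ for each bad prime, the polynomials $f_i(\lambda t)\in\fo[t]$ define an $\fo$-morphism $\dA^1_\fo\to\PP^N_\fo$, which factors through $\mC_0$ as desired.

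The hard part will be orchestrating these rescalings to resolve all bad primes simultaneously. The generic regularity of $\sigma_0$ is essential: it forces $(f_i(0))$ to generate $\fo$, ensuring the rescaled representation is everywhere defined, and it guarantees that the constant specializations to $\sigma_0\bmod\fp$ produced by the rescaling stay in $\mC_0$ rather than drifting into $\mD$. A careful local analysis at each bad prime, together with flatness of $\mD$ (so that $\mD$ equals the closure of $D$), then yields the global conclusion.
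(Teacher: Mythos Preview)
Your overall strategy matches the paper's --- parametrize over $k$, anchor at the integral point, then rescale to spread out --- but there is a genuine gap in the step where you assert that the constant terms $(f_0(0),\ldots,f_N(0))$ generate the unit ideal of $\fo$. This does not follow from $\sigma_0$ being integral. After you scale the $f_i$ by a single constant in $k^\times$ to clear \emph{all} denominators, you may well have introduced a common factor into the constant terms: over $\ZZ$ with $N=1$, take $f_0 = 1 + t/2$, $f_1 = 1$; the only scalings landing in $\ZZ[t]$ are even integers, leaving constant terms $(2c,2c)$. Your substitution $t\mapsto\lambda t$ cannot repair this, since it fixes the constant terms; at any prime $\fp$ dividing them the rescaled tuple vanishes identically modulo $\fp$ and defines no map to $\PP^N_{\FF_\fp}$ at all. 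Worse, when $\fo$ has nontrivial class group the section $\sigma_0\colon\Spec\fo\to\PP^N_\fo$ need not lift to $\dA^{N+1}_\fo\setminus\{0\}$ whatsoever, so no global scaling can ever produce coprime constant terms. Generic regularity of $\sigma_0$ is used only to obtain a unique $k$-rational preimage on $\PP^1$; it says nothing about the ideal $(f_i(0))\subset\fo$.

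The paper circumvents this by passing to a twist $\tau_c\colon\mY\to\PP^N_\fo$ of the affine cone, chosen so that its fiber over $\sigma_0$ is the trivial $\Gm$-torsor; then $\sigma_0$ does lift to a point $y\in\mY(\fo)$. One lifts $\phi_k$ to $\varrho\colon\dA^1_k\to\mY_k$ and normalizes so that $\varrho(0)=y$ \emph{exactly}, rather than clearing all denominators at once; this makes the constant terms of the affine coordinate functions integral by construction. Only then does a substitution $t\mapsto bt$ clear the remaining denominators in the higher-order terms, and a further substitution $t\mapsto at$ handles the finitely many primes where the image escapes $\mY$ or meets $\mD$ --- this last step is precisely your bad-prime rescaling, which is sound once the anchoring is done in the right order.
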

\begin{proof}
The idea of proof is that, simply put, starting with a parametrization $\psi\colon \dA_k^1\to C_0$ that maps $0$ to an integral point, the composed morphism with $\dA_k^1 \to \dA_k^1,\ x\mapsto cx$ for a sufficiently divisible $c$ spreads out to integral models.

	Let $x\in \mC_0(\fo)$ be such a generically regular point, and let $\mC \to \PP^n_{\fo}$ be an embedding of the projective scheme $\mC$. 
	Let $\psi\colon \dA_k^1\to C_0$ be a suitable restriction of the normalization of $C$: either to the preimage of $C\setminus D$ (if $C\cap D\ne\emptyset$) or the preimage of $C\setminus \{y\}$ for an arbitrary $y\ne x$ otherwise.
	As $x$ is regular, its preimage is rational, so by composing $\psi$ with a translation, we may assume $\psi(0)=x$.
	The affine cone $\tau\colon \dA_{\fo}^{n+1}\setminus \{0\} \to \PP^n_{\fo}$ is a $\Gm$-torsor, hence so is its fiber over $\psi(0)$.
	Denoting by $c \in H^1_\et(\Spec \fo;\Gm)=\cl_k$ the class of the latter torsor and by $\pi\colon \PP^n_\fo \to \Spec \fo$ the structure morphism, let $\tau_c\colon \mY\to \PP^n_{\fo}$ be a torsor of class $[\tau]-\pi^*c\in H_\et^1(\PP_\fo^n;\Gm)$, whose generic fiber is $\mY_k \cong \dA^{n+1}_k \setminus \{0\}$ (see constructions by Frei and Pieropan~\cite[Def~2.4, Prop~2.5, Thm.~2.7]{FP} for a concrete description of this twist). Its fiber above $\psi(0)$ is now a trivial torsor, hence $\psi(0)$ lifts to an integral point $y\in \mY(\fo)$.
	Describing $\psi$ by means of homogeneous polynomials lifts $\psi$ to a morphism $\varrho\colon \dA^1_k \to \dA_k^{n+1}\setminus \{0\} \cong \mY_k$, and multiplying this lift with a suitable constant in $k^\times$ guarantees that $\varrho(0)=y$.

	The twisted torsor $\mY$ is a quasi-affine scheme of the form $\mY = \Spec A\setminus V(I)$ with $A$ of finite type and $I\subset A$. Write  $A = \fo[a_1,\dots,a_s]$ for $s\ge 1$ and generators $a_1,\dots, a_s\in A$. The images of these generators under $\varrho$ are polynomials
	\begin{equation*}
		\varrho^*(a_i) = \sum_{0\le j\le d} \lambda_{i,j}t^j \in k[t]
	\end{equation*}
	for some $d\ge 0$ and $\lambda_{i,j}\in k$ for $1\le i \le s$, $0\le j\le d$.
	That $\varrho(0)$ is integral means that the composition
	\begin{equation*}
		\begin{tikzcd}
			A \arrow[r, "\varrho^*"] & k[t] \arrow[r, "t\mapsto 0"] & k
		\end{tikzcd}
	\end{equation*}
	factors through $\fo$. Therefore, the constant terms $\lambda_{i,0}$ are integral for all $1\le i \le s$.
	Writing $\lambda_{i,j} = p_{i,j}/q_{i,j}$ with integral $p_{i,j}$ and $q_{i,j}$ for the remaining $1\le i \le s$ and $1\le j\le d$, let $b = \prod_{i,j}q_{i,j}$ be the product of denominators and $t' = b^{-1}x$. Then $\varrho^*$ factors through $\fo[t']$, hence $\varrho$ spreads out to a morphism
	\begin{equation*}
	\varrho\colon \dA_{\fo}^1 \cong \Spec \fo[t'] \to \Spec A
	\end{equation*}
	of integral models.
	
	The preimage $\varrho^{-1}(V(I))$ does not meet the generic fiber, hence is contained in the fibers above a finite set $S\subset\Spec\fo$. Let $a\in \fo$ be divisible by all primes in $S$ and write $\varrho' = \varrho\circ m_a$, where $m_a\colon \dA_{\fo}^1 \to \dA_{\fo}^1$ denotes multiplication by $a$. For every closed point $z$ of $\dA^1_\fo$ over a prime $\fp\in S$, its image under $\varrho'$ is
	$\varrho'(z) = \varrho(az)$, which is congruent to $\varrho(0) = y$ modulo $\fp$, hence $\varrho'(z)\in \mY\subseteq \Spec A$. It follows that $\im\varrho' \subseteq \mY$: if a nonclosed point were to map into the closed set $\Spec A \setminus \mY$, so would a closed one. In fact, $\varrho'$ maps into the closed subscheme $\tau_c^{-1}(\mC)$ (as its generic fiber does), and as $y\in \tau_c^{-1}(\mC_0)$, the same argument as before further implies that $\varrho'$ factors through $\tau_c^{-1}(\mC_0)$. We may now obtain the desired morphism $\phi$ by composing $\varrho'$ with $\tau_c$.
\end{proof}
    
\begin{example}
        Let us exhibit a simple example of reparametrization in the proof above. Let $k$ be a number field, and consider the one-parameter family of cuspidal curves
		\begin{equation*}
			C_a\colon ax^2z=y^3
		\end{equation*}
		in $\PP^2_k$. Each one admits the parametrization $\PP_k^1\to C_a$ by
		\begin{equation*}
			\phi\colon [u:v]\mapsto [au^3:au^2v:v^3].
		\end{equation*}
		Let $D$ be the hyperplane $(x=0)$. Then we see that $C_a\cap D=[0:0:1]$ and all places dividing $a$ obstruct $\phi([1:1])= [1:1:1/a]$ from being integral. However, if we use the alternative parametrization
		\begin{equation*}
			\phi'\colon [u:v]\mapsto [u^3:au^2v:a^2v^3],
		\end{equation*}
		the image of every integral point $[1:v]\in \dA^1(\fo)$ becomes integral. 
    \end{example}

\begin{remark}
	The condition that the existing integral point is regular is necessary and comparable to the necessity of a smooth rational point on a geometrically rational curve to make it rational. For instance, consider the log rational curve
	\begin{equation}\label{eq:nodal-nonexample}
		C\colon 2y^2 = x^2\left(2x+1\right)
	\end{equation}
	in $\dA^2_\ZZ\subseteq\PP^2_\ZZ$. It admits the integral point $(0,0)$, but no other integral points. Indeed, for an integral solution with $y\ne 0$, the $2$-adic valuation of the left-hand side of~\eqref{eq:nodal-nonexample} is odd, but the one on the right cannot be.
\end{remark}

The remaining case is that of a rational curve meeting the boundary in two points. The log genus of such curves is one, and they behave much like elliptic curves; in particular, they are algebraic groups (more precisely, tori), as soon as they possess a rational point.
Two types of embeddings are to be distinguished: those that map the two boundary points to different points and those that identify them (i.e., nodal singular curves). 

\begin{definition}
	A \emph{toroidal curve} over $\fo$ is a flat, projective log scheme $(\mC,\mD)$ with generic fiber $(C,D)$ such that
	\begin{enumerate}
		\item $\mC$ is birational to $\PP^1_\fo$, and
		\item $\deg(\phi^{-1}{D}) = 2$, where $\phi\colon \PP^1\to C$ is the normalization, and the residue field of every point in $\phi^{-1}(D)$ is separable over $k$.
	\end{enumerate}
	Note that if $(\mC,\mD)$ is a toroidal curve over $\fo$, then its generic fiber $(C,D)$ is a toroidal curve over the field of fractions $k$.

	We say that a toroidal curve $(\mC,\mD)$ with generic fiber $(C,D)$ is \emph{nodal} if $D$ consists of a single point, that is, if the normalization $\phi$ identifies the two boundary points on $\PP^1$ into a nodal singularity.
\end{definition}

The following lemma shows that, roughly speaking, a toroidal curve $C$ admits a parametrization by a torus $T = \PP^1\setminus \phi^{-1}(D) \to C\setminus D$.

\begin{lemma}\label{le:torus-para-rational}
 Let $(\mC,\mD)$ be a toroidal curve over $\fo$ with generic fiber $(C,D)$. Let $K=k(x)$ be the residue field at one of the points $x\in\phi^{-1}(D)$, where $\phi\colon \PP^1\to C$ is the normalization. Then $\PP^1\setminus \phi^{-1}(D)$ is isomorphic to a torus $T$, where
 \begin{enumerate}
	\item $T = \Gm$ if $K=k$ and
	\item $T$ is the norm-one torus $R^1_{K/k}\Gm$ if $K\ne k$.
 \end{enumerate}
\end{lemma}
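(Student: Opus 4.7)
The plan is to split into two cases according to the degree $[K:k]$. In the first case, $K=k$, so $\phi^{-1}(D)$ is a disjoint union of two $k$-rational points; a $k$-rational automorphism of $\PP^1_k$ carries these two points to $0$ and $\infty$, producing an isomorphism $\PP^1 \setminus \phi^{-1}(D) \cong \PP^1_k \setminus \{0,\infty\} \cong \Gm$.

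In the second case, the separability hypothesis ensures that $K/k$ is a separable quadratic (in particular Galois) extension, and $\phi^{-1}(D)$ consists of a single closed point $x$ of degree $2$. Base-changing to $K$ reduces to the first case and yields $(\PP^1 \setminus \phi^{-1}(D))_K \cong \Gm_K$, so $\PP^1 \setminus \phi^{-1}(D)$ is already a $k$-form of $\Gm$; the substantive task is to identify which form it is.

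To pin this form down, I would write the irreducible polynomial $f\in k[t]$ defining $x$ as $f(t) = (t-\alpha)(t-\sigma(\alpha))$ over $K$, where $\sigma$ generates $\Gal(K/k)$, and use the rational function $u = (t-\alpha)/(t-\sigma(\alpha))$ to realize $(\PP^1\setminus\phi^{-1}(D))_K \cong \Spec K[u^{\pm 1}] = \Gm_K$; under this identification a direct computation shows that $\sigma$ sends $u$ to $u^{-1}$. On the other hand, $(R^1_{K/k}\Gm)_K$ identifies with $\Gm_K$ via the first projection from $\ker(\Gm_K\times \Gm_K \to \Gm_K)$, $(a,a^{-1})\mapsto a$, and the Galois swap of the two factors of the Weil restriction likewise translates to $a\mapsto a^{-1}$. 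Composing the two $K$-isomorphisms gives a Galois-equivariant isomorphism $(\PP^1 \setminus \phi^{-1}(D))_K \cong (R^1_{K/k}\Gm)_K$, which by Galois descent produces the desired $k$-isomorphism.

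The main technical obstacle is ensuring that the two $k$-forms actually carry the same descent datum, rather than merely becoming isomorphic to $\Gm$ after the same base change; this reduces to matching the two explicit $\sigma$-actions $u\mapsto u^{-1}$ and invoking the Hilbert~90–style classification of $k$-forms of $\Gm$ by $H^1(\Gal(K/k);\mathrm{Aut}(\Gm_K))$. Everything else in the argument — the explicit Möbius move in case one, the factorization of $f$ in case two, and the computation of $\sigma(u)$ — is routine.
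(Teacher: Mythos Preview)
Your proof is correct. For $K=k$ you and the paper argue identically; for $K\ne k$ you take a more hands-on route. The paper proceeds in two conceptual steps: first, $\PP^1\setminus\phi^{-1}(D)$ is a torsor under $T=R^1_{K/k}\Gm$ (obtained by Galois descent of the $\Gm_K$-action on $(\PP^1\setminus\phi^{-1}(D))_K$); second, since $(\PP^1\setminus\phi^{-1}(D))(k)\ne\emptyset$, this torsor is trivial, hence isomorphic to $T$ itself. You instead fix the explicit coordinate $u=(t-\alpha)/(t-\sigma(\alpha))$, check that the transported Galois action on $\Gm_K$ is inversion, observe that the same holds for $T_K$, and descend the resulting equivariant $K$-isomorphism. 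Your argument is more explicit and self-contained---you never need the torsor formalism or the existence of a $k$-rational point---whereas the paper's torsor-plus-rational-point argument is terser and isolates a reusable principle. Unpacking the paper's first step would in any case lead to essentially the cocycle computation you carry out, so the two arguments are close cousins rather than truly independent proofs.
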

\begin{proof}
A parametrization $\Gm\to C\setminus D$ exists over the residue field $K\coloneq k(x)$ --- either $k$ or a quadratic extension thereof. If $K=k$, we are done.
Now assume that $K/k$ is quadratic, and denote by $T$ the norm-one torus $R^1_{K/k}\Gm$.
Then $\PP^1\setminus \phi^{-1}(D)$ is a $K/k$-form of $\Gm$, that is, a $T$-torsor, by Galois descent.
As $(\PP^1\setminus \phi^{-1}(D))(k)\neq\emptyset$, this torsor is trivial.
\end{proof}

If $K/k$ is a separable extension (as is the case in Lemma~\ref{le:torus-para-rational}), the integral closure $\fO$ of $\fo$ in $K$ is a finite extension of $\fo$, itself a Dedekind ring, and flat as it is torsion free. The restriction of scalars $R_{\fO / \fo}\Gmok$ thus exists~\cite[\S\,7.6~Thm.~4]{MR1045822}, and as the norm restricts to the rings, it induces a homomorphism
\begin{equation*}
	N_{\fO/\fo}\colon R_{\fO / \fo}\GmO \to \Gmok
\end{equation*}
of $\fo$-group schemes by working on local bases, whose kernel
\begin{equation*}
	R^1_{\fO / \fo}\GmO = \ker (N_{\fO/\fo}),
\end{equation*}
we call the \emph{norm-one torus}.

\begin{definition}
	Let $(\mC,\mD)$ be a toroidal curve over $\fo$, and
    keep the notation from Lemma~\ref{le:torus-para-rational}
	\begin{enumerate}
		\item The field $K$ is called the \emph{splitting field} of $(\mC,\mD)$.
		\item The torus $T$ is called \emph{the torus of $(\mC,\mD)$} and admits a birational, surjective morphism $\tau \colon T\to C_0$. 
		\item If $K=k$, then $\mT = \Gmok$ is called the \emph{standard model} of $T$. Otherwise, let $\fO$ be the integral closure of $\fo$ in $K$; we then call the norm-one torus
		\begin{equation*}
			\mT=R^1_{\fO / \fo}\GmO
		\end{equation*}
		the \emph{standard model} of $T$.
	\end{enumerate}
	Note that these definitions are independent of the choice of $x\in \phi^{-1}(D)$.
\end{definition}

\begin{proposition}\label{prop:torus-para-integral}
Let $(\mC,\mD)$ be a toroidal curve over $\fo$, $K$ its splitting field, and $\tau\colon T\to C_0$ its torus, with standard model $\mT$.
\begin{enumerate}
	\item The set $\mC_0(\fo)$ of integral points is contained in the image of finitely many cosets of $\mT(\fo)$ in $T(k)$.
	\item Assume that $\mC_0(\fo)$ contains a generically regular integral point $x$. Let $y$ be its unique preimage in $T(k)$. Then there is a finite set $S\subset \Spec \fo$ and $e\ge 1$ such that $\tau(G_{S,e}y)\subset \mC_0(\fo)$, where
	\begin{equation*}
		G_{S,e} = \ker\left(\mT(\fo) \to \prod_{\fp\in S} \mT(\fo/\fp^e)\right).
	\end{equation*}
\end{enumerate}
\end{proposition}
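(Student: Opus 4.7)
The plan is to introduce a second integral model of $T$ coming from the normalization of $\mC$ and to compare it, prime by prime, with the standard model $\mT$. Let $\tilde{\mC} \to \mC$ denote the normalization; its generic fiber is $\PP^1_k$ (the normalization of the rational curve $C$), so
\begin{equation*}
	\mathfrak{P}_0 \coloneqq \tilde{\mC} \setminus \phi^{-1}(\mD)_{\red}
\end{equation*}
is a flat projective $\fo$-model of $T$ equipped with a canonical morphism $\mathfrak{P}_0 \to \mC_0$ lifting $\tau$. Since $\tilde{\mC}\to\mC$ is proper and $\phi$ is an isomorphism above the smooth locus of $C$ (which contains $C_0$ in both the nodal and non-nodal toroidal cases), the valuative criterion lifts each integral point $P\in\mC_0(\fo)$ uniquely to some $\tilde{P}\in\mathfrak{P}_0(\fo)$ whose generic fiber is the preimage of $P$ under $\tau$. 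In particular $\tau^{-1}(\mC_0(\fo))\subseteq \mathfrak{P}_0(\fo)$, reducing both assertions to a comparison of the integral models $\mathfrak{P}_0$ and $\mT$ of $T$.

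For part (i) I would first spread the generic-fiber isomorphism $\mathfrak{P}_0\cong \mT$ to an isomorphism over $\fo[S^{-1}]$ for some finite set $S$ of primes. At each $\fp\in S$ the set $\mathfrak{P}_0(\fo_\fp)$ is clopen in the compact space $\tilde{\mC}(\fo_\fp)=\tilde{\mC}(k_\fp)$ (by properness, reduction factoring through the finite special fiber), and is therefore itself compact; meanwhile, $\mT(\fo_\fp)$ is an open subgroup of $T(k_\fp)$ by smoothness of $\mT$. A compact subset of a topological group is covered by finitely many cosets of an open subgroup, so this holds at each $\fp\in S$ and then, after taking products, for $\prod_{\fp\in S}\mathfrak{P}_0(\fo_\fp)\subseteq T(k_S)$ with respect to $\mT(\fo_S)$. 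Finally, two elements of $\mathfrak{P}_0(\fo)$ lying in the same $\mT(\fo_S)$-coset automatically lie in the same $\mT(\fo)$-coset: for $\fp\notin S$ the quotient is integral because $\mathfrak{P}_0\cong\mT$ is a group scheme there, and globally $\mT(\fo)=T(k)\cap\bigcap_\fp \mT(\fo_\fp)$.

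For part (ii) I use the same $S$ and let $\tilde{y}\in\mathfrak{P}_0(\fo)$ be the integral lift of $y$. For $\fp\notin S$ we get $G_{S,e}\cdot y\subseteq \mT(\fo_\fp)\cdot\tilde{y}=\mT(\fo_\fp)=\mathfrak{P}_0(\fo_\fp)$ by group closure. At each $\fp\in S$ the set $\mathfrak{P}_0(\fo_\fp)$ is an open neighborhood of $\tilde{y}|_\fp$ in $T(k_\fp)$, and the congruence subgroups $\ker(\mT(\fo_\fp)\to \mT(\fo_\fp/\fp^e))$ form a neighborhood basis of the identity in $\mT(\fo_\fp)$ by smoothness of $\mT$; by continuity of translation, for $e$ large enough the orbit $G_{S,e}\cdot\tilde{y}|_\fp$ remains in $\mathfrak{P}_0(\fo_\fp)$. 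Patching these local integrality conditions yields $G_{S,e}\cdot y\subseteq \mathfrak{P}_0(\fo)$, and $\tau$ maps this into $\mC_0(\fo)$. The main delicate point is really part (i), namely that $\mathfrak{P}_0$ and $\mT$ differ at only finitely many primes and that $\mathfrak{P}_0(\fo_\fp)$ is genuinely compact at those primes; both rest on the properness of $\tilde{\mC}$ and the flatness of the boundary divisor, but they are the structural inputs that make the coset-counting argument work.
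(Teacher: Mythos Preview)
Your approach via the normalization $\tilde{\mC}$ and the auxiliary model $\mathfrak{P}_0$ is genuinely different from the paper's and is conceptually clean. The paper argues more explicitly: it produces regular functions $f=ct$ and $g=ct^{-1}$ on $\mC_0$ (for a suitable $c\in\fo$), observes that $f(P)g(P)=c^2$ for integral $P$, and concludes that $f(P)\mid c^2$ takes only finitely many values up to units in $\fO$; this directly gives finitely many $\mT(\fO)$-cosets, which then descend to $\mT(\fo)$-cosets. Your route instead compares two integral models of $T$ and handles the finitely many bad primes topologically. For part~(ii) the two arguments are essentially the same spreading-out-plus-continuity idea.

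There is, however, a genuine gap in your part~(i). The proposition is stated over an arbitrary Dedekind ring $\fo$ (see the opening of Section~\ref{sec:schemes-and-curves}), and your finiteness step hinges on the claim that $\tilde{\mC}(k_\fp)$ is compact with ``finite special fiber''. This uses that the residue field $\fo/\fp$ is finite, which holds for rings of integers in global fields but fails in general (e.g.\ $\fo=\CC[t]$). Without local compactness, the covering of $\mathfrak{P}_0(\fo_\fp)$ by finitely many $\mT(\fo_\fp)$-cosets does not follow from openness alone. The paper's divisibility argument $f(P)\mid c^2$ avoids this entirely, as a nonzero element of a Dedekind domain has only finitely many divisors up to units regardless of residue fields. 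If you want to keep your model-theoretic framing in full generality, you could replace the compactness step by observing that both $t$ and $t^{-1}$ extend to regular functions on $\mathfrak{P}_0$ after clearing a single denominator, which bounds the $\fp$-valuation of $t$ on $\mathfrak{P}_0(\fo_\fp)$---this is essentially the paper's argument transplanted into your setup.

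A minor point: $\mathfrak{P}_0$ is not projective (it is the complement of a divisor in $\tilde{\mC}$), and for general Dedekind rings you should also justify that the normalization $\tilde{\mC}\to\mC$ is finite (hence proper), which needs an excellence hypothesis or the concrete observation that the normalization is $\PP^1_\fo$.
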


\begin{proof}
If the set of integral points only consists of singular points, all assertions are trivially true (as there can be only finitely many singular points), so we may assume that $\mC_0$ admits a generically regular integral point $x\in \mC_0(\fo)$, and we may assume that the identity element $1\in T(k)$ maps to $x$ under the morphism $\tau$.

Let $\fO$ be the integral closure of $\fo$ in $K$.
For the first assertion, multiplying the nonconstant, regular, invertible function $t$ and its inverse $t^{-1}$ on $T_K=\Spec K[t,t^{-1}]$ with a suitable constant $c\in \fo$, we obtain the two regular functions $f=ct$ and $g=ct^{-1}$ on $\mC_0$. In particular, both take on integer values on integral points in $\mC_0(\fo)$. Moreover, $f(x)\mid c^2$, so up to units, $f(x)$ takes on finitely many values. It follows that $\mC_0(\fO)$ is contained in finitely many cosets of
$\mT(\fO) =\fO^\times$;
concretely, every point admits the shape $t=\epsilon d/c$, where $\epsilon\in \fO^\times$ and $d$ runs over the finitely many divisors of $c^2$ up to units.
Now the claim follows for $\mC_0(\fo)\subset \mC_0(\fO)$ as well: the preimages under $\tau$ of any two generically regular points in $\mC_0(\fo)$ differ by an element of $T(k)$, hence they share an orbit under $\mT(\fo) = \mT(\fO)\cap \mT(k)$ if they share a $\mT(\fO)$-orbit.

For the second assertion, we may assume that $y=1$ by multiplying with its inverse. The morphism $\tau$ spreads out to $\mT\to \mC_0$ over the complement of a finite set $S\subset \Spec\fo$ of closed points by avoiding finitely many primes in finitely many denominators. In particular, $\tau(\mT(\fo))\subset \tau(\mT(\fo_\fp)) \subset \mC_0(\fo_\fp)$ for each prime ideal $\fp\not\in S$. Moreover, for each prime $\fp\in S$, the map
$\mT(k) = T(k) \to C(k) = \mC(\fo)$ is continuous with respect to the topology induced by the $\fp$-adic valuation. Hence, the preimage $\tau^{-1}\mC_0(\fo_\fp)$ is open, and as it contains $1$, contains a set of the form
\begin{equation*}
	\ker (\mT(\fo_\fp) \to \mT(\fo_\fp/(\fp\fo_\fp)^e))
\end{equation*}
for some $e\ge 1$.
Thus, each element $g$ of the group
\begin{equation*}
	G_{S,e} = \ker\left(\mT(\fo) \to \prod_{\fp\in S} \mT(\fo/\fp^e)\right)
\end{equation*}
for sufficiently large $e$ maps into $\mC_0(\fo) = \bigcap_{\fp\in \Spec\fo \setminus \{(0)\}} \mC_0(\fo_\fp) \subset \mC(k)$.
\end{proof}

\begin{corollary}\label{cor:torus-infinite-numberfield}
	Let $\fo$ be the ring of integers of a number field $k$ and $(\mC,\mD)$ be a toroidal curve with splitting field $K$.
	Then $\# \mC_0(\fo) = \infty$ if and only if
	\begin{enumerate}
    	\item the set $\mC_0(\fo)$ contains a generically regular point,
    	\item the splitting field $K$ has an infinite group of units (i.e., $\#\infty_K > 1$), and
    	\item if $[K:k]=2$, there is a place $v\in \infty_k$ that splits in $K$ (i.e., $\#\infty_K>\#\infty_k$).
	\end{enumerate}
\end{corollary}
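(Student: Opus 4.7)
The plan is to apply Proposition~\ref{prop:torus-para-integral} to reduce the question of infiniteness of $\mC_0(\fo)$ to that of $\mT(\fo)$, and then to compute the rank of $\mT(\fo)$ via Dirichlet's unit theorem and the logarithm embedding.

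For the ``if'' direction, assume (i), (ii), (iii). Proposition~\ref{prop:torus-para-integral}(ii) furnishes a finite-index subgroup $G_{S,e}\subseteq\mT(\fo)$ whose $\tau$-image lies in $\mC_0(\fo)$, so it suffices to show that $\mT(\fo)$ is infinite. If $K=k$, then $\mT(\fo)=\fo^\times$, which has rank $\#\infty_k-1=\#\infty_K-1\geq 1$ by (ii). If $[K:k]=2$, the rank identity $\rk\mT(\fo)=\#\infty_K-\#\infty_k$ (established below) together with (iii) gives $\rk\mT(\fo)\geq 1$. Either way $G_{S,e}$ is infinite, and since $\tau$ is quasi-finite (both source and target being irreducible curves, with $\tau$ non-constant), $\tau(yG_{S,e})$ is an infinite subset of $\mC_0(\fo)$.

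For the ``only if'' direction, assume $\#\mC_0(\fo)=\infty$. Since $C$ has finitely many singular points and each rational point of $C$ extends to a unique integral point of $\mC$ by the valuative criterion of properness, only finitely many elements of $\mC_0(\fo)$ can fail to be generically regular, yielding (i). By Proposition~\ref{prop:torus-para-integral}(i) the set $\mC_0(\fo)$ lies in the $\tau$-image of finitely many cosets of $\mT(\fo)$ in $T(k)$, and $\tau$ is quasi-finite, forcing $\mT(\fo)$ to be infinite. Since $\mT(\fo)\subseteq\fO^\times$ in either case, $\fO^\times$ is infinite, giving (ii); in the $[K:k]=2$ case the rank identity further forces $\#\infty_K>\#\infty_k$, giving (iii).

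The main step is the rank identity $\rk\mT(\fo)=\#\infty_K-\#\infty_k$ in the quadratic case. The idea is to use the logarithm embedding $\fO^\times\hookrightarrow\bigoplus_{w\in\infty_K}\RR$, whose image is a full-rank lattice in the hyperplane $H=\{\sum_w x_w=0\}$. The norm $N\colon\fO^\times\to\fo^\times$ corresponds on logarithms to the summing map $N_*\colon H\to H'=\{\sum_v x_v=0\}$ given by $(x_w)\mapsto(\sum_{w\mid v}x_w)_v$. An explicit section (take $x_w=y_v/\#\{w'\mid v\}$) shows that $N_*$ is $\RR$-linearly surjective. Consequently the image $N_*(\log\fO^\times)$, being a subgroup of $\log\fo^\times$ whose $\RR$-span is all of $H'$, has rank $\dim H'=\#\infty_k-1$; and the kernel $\log\mT(\fo)=\log\fO^\times\cap\ker N_*$ then has rank $(\#\infty_K-1)-(\#\infty_k-1)=\#\infty_K-\#\infty_k$, completing the key computation.
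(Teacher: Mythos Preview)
Your proof is correct and follows the same overall architecture as the paper: both reduce to Proposition~\ref{prop:torus-para-integral} and then compute $\rk\mT(\fo)$ in the quadratic case. The difference lies in how the rank identity $\rk\fo_K^1=\#\infty_K-\#\infty_k$ is obtained. The paper invokes the Hasse norm theorem and local class field theory to show that the cokernel of $N_{K/k}\colon\fo_K^\times\to\fo_k^\times$ injects into a finite product of Galois groups, hence the image of the norm has full rank $\#\infty_k-1$. You instead argue elementarily on the logarithm side: the summing map $N_*\colon H\to H'$ is $\RR$-linearly surjective via an explicit section, so the image of the full-rank lattice $\log\fO^\times$ spans $H'$ and, being contained in the rank-$(\#\infty_k-1)$ lattice $\log\fo^\times$, has exactly that rank. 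Your route avoids class field theory entirely at the cost of a small computation; the paper's route is shorter to state but leans on a much deeper input. You are also more explicit than the paper about the ``only if'' direction (singular points are finite, quasi-finiteness of $\tau$, finite generation of $\mT(\fo)\subseteq\fO^\times$), which the paper compresses into a single sentence.
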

\begin{proof}
	In this setting $\fO=\fo_K$ is the ring of integers of $K$. The statement follows from Proposition~\ref{prop:torus-para-integral} on noting that the subgroup $G_{S,e}$ always has finite index. Moreover, if $[K:k]=2$, then the rank of the group $\fo_K^1=\mT(\fo_K)$ of units of norm $1$ is
	\begin{equation}\label{eq:norm-1-rank}
		\rk\fo_K^1=\#\infty_K - \#\infty_k,
	\end{equation}
	which is positive if and only if at least one archimedean place splits.
	Indeed, 
	\begin{equation*}
		\begin{tikzcd}
			1 \arrow[r] & \fo_K^1 \arrow[r] & \fo_K^\times \arrow[r, "N_{K/k}"] & \fo_k^\times \ar[r]& \displaystyle\prod_{\substack{v\in \Omega_k\\w\mid v}} \Gal(K_w/k_v) 
		\end{tikzcd}
	\end{equation*}
	is exact, the morphism on the right stemming from the Hasse norm theorem and local class field theory, and mapping to a torsion group.
\end{proof}

\begin{remark}
	\begin{enumerate}
		\item 	In contrast to log rational curves, here we do need to assume that $\mC_0(\fo)\ne \emptyset$, which is not implied by the existence of integral points everywhere locally, as strong approximation over a number field fails drastically for tori.
		\item That the second condition is necessary can also be seen in terms of an analytic obstruction: if $K$ has only one archimedean place, then the two regular functions $t$ and $t^{-1}$ on $\Gm$ impose an obstruction at infinity, so that the set of integral points on any integral model of $C_0 \times_{\Spec k} \Spec K$ is finite by~\cite[Thm.~2.3.11]{Wilsch-toric}, including those on $\mC_0 \times_{\Spec \fo} \Spec \fo_K$. 
        \item 	If $(\mC,\mD)$ is a flat, projective log scheme with generic fiber $(C,D)$ and normalization $\phi$ such that $\mC$ is birational to $\PP^1_\fo$ and $\phi^{-1}(D)$ consists of a closed point of degree $2$ defined over an inseparable extension, then $\PP^1\setminus \phi^{-1}(D)$ is a nontrivial form of the additive group $\GG_{\mathrm{a}}$.
	\end{enumerate}
\end{remark}

\section{Integral approximation constants}\label{se:intappcst}
From now on, $k$ is a fixed number field, and $\fo$ is its ring of integers.
\subsection{Preliminaries}
In order to define integral approximation constants, we are in need of distances and height functions.

\subsubsection*{Distances}
If $X$ is a projective $k$-variety and $v$ a place of $k$, we endow the set $X(k_v)$ with a distance function $d_v\colon X(k_v)\times X(k_v)\to \RR_{\ge 0}$ as constructed by McKinnon and Roth~\cite[(2.1--2)]{McK-Roth}. Such a distance function depends on the choice of an embedding of $X$ into a projective space, but only up to a bounded constant. The following properties of this distance function found in their work will be useful.

If $K/k$ is a finite extension, $w\mid v$ is a place above $v$, and $x$ and $y$ are local points in $X(k_v)$, then by~\cite[Prop.~2.1]{McK-Roth},
\begin{equation}\label{eq:distance_extension}
	d_w(x,y) = d_v(x,y)^{[K_w:k_v]}
\end{equation}
(as $|\cdot|_w = |\cdot|_v^{[K_w:k_v]}$);
if $\bx\in k_v^n =  \dA^n(k_v)\subset \PP^n(k_v)$ with $\norm{\bx}_v\le c$ for some $c\in \RR_{>0}$, then
\begin{equation}\label{eq:distance_A1}
	\norm{\bx}_v \ll_c d_v(0,x) \ll \norm{\bx}_v
\end{equation}
by a quick calculation based on the formulas~\cite[(2.1--2)]{McK-Roth} or by \cite[Lemma 2.5]{McK-Roth}. 

If $\phi\colon \PP^1\to X$ is a nonconstant morphism with image $C=\im\phi$ and $x\in \PP^1(k_v)$ is such that the branch of $X$ corresponding to $x$ has multiplicity $m$ in $\phi(x)$, then
\begin{equation}\label{eq:distance_singularity}
	d_v(\phi(x),\phi(y)) \asymp_\phi d_v(x, y)^m
\end{equation}
for $d_v(x, y)\ll 1$~\cite[p.~537]{McK-Roth}.

\subsubsection*{Heights}
A line bundle $L$ on a $k$-variety $X$ equipped with an adelic metric $\norm{\cdot}$ induces a height function $H_L\colon X(k)\to \RR_{>0}$, which only depends on $\norm{\cdot}$ up to a bounded constant. Concretely, for $d\in \ZZ$ and $n\ge 0$, an $\cO(d)$-height on $\PP^n$ is given by 
\begin{equation*}
	H_{\cO(d)}([x_0:\dots:x_n]) = \prod_{v\in\Omega_k} \max\{|x_0|_v,\dots, |x_n|_v\}^d.
\end{equation*}
If $K/k$ is a finite extension, then the base change $L_K$ of $L$ to $X_K$ is equipped with an induced metric, corresponding to a height function $H_{L_K}$ with
\begin{equation}\label{eq:height_extension}
	H_{L_K}(x) = H_L(x)^{[K:k]}
\end{equation}
for all $x\in X(k)$ (again as a consequence of the normalization of the absolute values). See work by Peyre~\cite[\S\,2]{Peyre}, for instance, for more details on these constructions.

\subsection{Approximation over a single place}
Motivated by the work of McKinnon and \cite{McK,McK-Roth}, we propose a variant of the approximation constant. It measures how well a local point $x$ can be approximated by global points $y$ in some set $W$ by attempting to minimize one of the distance functions $d_v(x,y)$ compared to a height $H(y)$.
Mostly, this set $W$ will be the set $\mU(\fo)$ of integral points of a nice log scheme $(\mX,\mD)$, and the point $x$ will normally be a rational point on $D$, but it is convenient to keep the definition more general.
\begin{definition}\label{def:alpha}
	Let $X$ be a projective $k$-variety equipped with a height function $H\colon X(k)\to\RR_{>0}$ associated with a line bundle $L$; let $W\subseteq X(k)$ be a subset. Let $v$ be a place of $k$, $x\in X(k_v)$ be a local point and $d_v(\cdot,\cdot)$ be a distance function. The \emph{approximation constant} $\alpha_v(x, W; L)$ of $x$  with respect to $W$ and $L$ is defined as follows.
	\begin{enumerate}
		\item If $x$ does not lie in the closure of $W \setminus \{x\}$ in $X(k_v)$, then
		\begin{equation*}
			\alpha_v(x, W; L)=\infty.
		\end{equation*}
		\item Otherwise, 
		\begin{equation*}
			\alpha_v(x, W; L) = \inf\left\{
			\alpha \in \RR_{>0}
			\ \middle|\  \substack{
				\text{there is a sequence $(y_n)_{n\in \NN} \subseteq W\setminus \{x\}$ such that}\\
				y_n\to x \text{ and } d_v(x, y_n)^\alpha H(y_n) \text{ is bounded from above}
			}
			\right\}.
		\end{equation*}
	\end{enumerate}
\end{definition}
\begin{remark}
	\begin{enumerate}[label=(\arabic*)]
		\item Similarly to \cite[Rem.~(a) after Def.~2.9]{McK-Roth}, this definition does not depend on the choice of a particular distance $d_v$ or height function $H$ associated with $L$, as they only differ by a bounded factor.
		\item If $\mU$ is an $\fo$-scheme, we write $\alpha_v(x, \mU; L)$ for $\alpha_v(x,\mU(\fo);L)$; similarly if $V\subset X$ is a subvariety, $\alpha_v(x,V;L)$ should be read as $\alpha_v(x,V(k);L)$; in particular, $\alpha_v(x,X;L)$ is the approximation constant studied by McKinnon--Roth \cite[Definition 2.9]{McK-Roth}.
		\item When $L$ is positive enough, the height $H_L(y)$ measures the complexity of a point $y$, and thus the approximation constant measures how well a point $x$ can be approximated by integral points $y\ne x$ of bounded complexity.
		Concretely, the following positivity properties affect the integral approximation constant in the same way that they affect the rational one.
		It is positive if $L$ has the Northcott property ``around $x$''. This happens when $L$ is ample \cite[Prop.~2.14~(d)]{McK-Roth} or more generally, when $L$ is big and $x$ lies outside the augmented base locus (this follows from e.g.~\cite[pf.~of~Prop.~2.12]{Peyre}). Furthermore, it is nonnegative if $H_L$ is bounded from below near $x$; this happens if $L$ is numerically effective and $x$ lies outside the asymptotic base locus \cite[Rem.~(c) after Def.~2.9]{McK-Roth}, for instance if $L$ is nef. These are the cases that we shall mostly be interested in. 
	\end{enumerate}
\end{remark}

Before continuing with our analysis, we discuss some necessary conditions for the constant $\alpha_v(x, W; L)$ to be finite: in particular, a point $x$ needs to lie in the $v$-adic closure of $W$. We shall emphasize the case of archimedean places.

\subsubsection*{Archimedean approximation and Clemens complexes} 
In the absence of obstructions --- specifically, the analytic and Brauer--Manin obstructions [\citenum{Santens}, Thm.~3.12, Def.~3.14, \S\,5; \citenum{Wilsch-toric}, \S\,2.3] --- integral points of bounded log anticanonical height on log Fano schemes tend to equidistribute with respect to an archimedean place $v$ towards a limit measure supported on a subset of the boundary, $D(k_v)$ encoded in the \emph{Clemens complex}.

Let $D$ be a divisor with geometrically strict normal crossings, and let $\cA$ be its set of geometric components. The \emph{geometric Clemens complex} is the set
\begin{equation*}
	\cC(D) = \left\{(A, Z)\ \middle|\ A\subseteq \cA,\ \text{$Z$ is an irreducible component of $\bigcap_{E\in A} E$} \right\},
\end{equation*}
equipped with the partial order defined via $(A,Z)< (A',Z')$ if $A\subsetneq A'$ and $Z\supsetneq Z'$.
In other words, it consists of a vertex $(E,E)$ for every geometric component $E$ of $D$. Simplices are then glued onto the vertices corresponding to components with nonempty intersection.

Let $\Gamma_v$ be the Galois group of $k_v$; it acts on the set $\cA$ of geometric components of $D$.
 The \emph{$v$-analytic Clemens complex}, which encodes intersections in $X(k_v)$, is defined as
\begin{equation*}
	\cC_v(D) = \left\{(A, Z)\ \middle|\ \substack{A\subseteq \cA \text{ is $\Gamma_v$-invariant}\\ \text{$Z$ is an irreducible component of $\bigcap_{E\in A} E$ with $Z(k_v)\ne \emptyset$ }} \right\}.
\end{equation*}
Note that for $(A,Z)\in \cC_v(D)$, the subvariety $Z$ is defined over $k_v$, in general.

Equidistribution statements for integral points of bounded log anticanonical height such as those obtained by Chambert-Loir, Tschinkel~\cite{clt-vector}, and Santens~\cite{Santens} now involve a \emph{residue measure}~\cite[\S\,2.1]{C-L-Tsch} supported on the \emph{minimal strata}
\[
	Z_{D,v} = \bigcup_{(A,Z)\in \cC_v(D) \text{ maximal}} Z
\]
as their limit.
Faces $(A, Z)$ that are maximal, but not maximal dimensional only contribute to lower order terms of the counting function, hence not to the overall limit measure --- restricting the counting problem to points near such a stratum still tends to result in equidistribution on $Z$, though (cf. \cite[p.~3]{Santens}); as only the local distribution is relevant for approximation constants, it is natural to include such maximal, but not necessarily maximal-dimensional, faces in the analysis.

In light of these phenomena, we shall mostly restrict ourselves to studying approximation constants of rational points $x\in Z_{D,v}(k_v)\cap X(k)$, but we remain more general wherever such a restriction is not helpful.

\subsection{Simultaneous approximation}
Properties such as weak or strong approximation involve more than a single place at once, and indeed, approximation constants measuring how well points can be simultaneously approximated over several places are straightforward to define. For a finite set $S$ of places, let $d_S = \sum_{v\in S} d_v$ on
$X(k_S)\times X(k_S)$. 

\begin{definition}
	Let $X$ be a projective $k$-variety equipped with a height function $H$ associated with an ample line bundle $L$, and $S$ a finite set of places.
	Let $\bx = (x_v)_{v\in S}\in X(k_S)$ be a tuple of local points over places in $S$ and $W\subset X(k)$. We write $\Delta_S$ for the diagonal image $X(k)\to X(k_S)$. The \emph{simultaneous approximation constant} $\alpha_S(\bx, W; L)$ of $\bx$  with respect to $W$ and $L$ is defined as follows.
	\begin{enumerate}
		\item If there is no sequence $(y_i)_{i\in \NN}\subseteq W$ of points with $\Delta_S(y_i)\neq\bx$ and $d_S(\Delta_S(y_i), \bx)\to 0$, let
		\begin{equation*}
			\alpha_S(\bx, W; L) =\infty.
		\end{equation*}
		\item Otherwise, let
		\begin{equation*}
		\alpha_S(\bx, W; L)  = \inf\left\{
		\delta\in \RR_{>0}
		\ \middle|\  \substack{
			d_S(\Delta_S(y),\bx)^\delta H(y) \text{ is} \\
			\text{bounded from above for all } y\in W
		}
		\right\}.
		\end{equation*}
	\end{enumerate}
\end{definition}
\begin{remark}
	\begin{enumerate}[label = (\arabic*)]
		\item Analogously to the case of a single place, 
        we omit $\mX\setminus \mD$ from the notation if a nice log pair $(\mX, \mD)$ is clear from context.
		\item The simultaneous approximation constant $\alpha_S(\bx,W;L)$ is bounded from below by $\max_{v\in S}\alpha_v(x_v,W;L)$.
	\end{enumerate}
	
\end{remark}

In fact, the Manin--Peyre adelic equidistribution principle \cite[pp.~230--234]{Peyre} for a Fano variety $X$ implies simultaneous equidistribution of $X(k)$ over any finite set of places whenever it holds.
In the integral setting, such an equidistribution statement would hold within (a subset of)
\begin{equation}\label{eq:X}
	\bX = \prod_{v\in \infty_k} Z_{D,v}(k_v) \times \prod_{v\not\in \infty_k} \mU(\fo_v) \subset X(\RA_k),
\end{equation} cf. [\citenum{C-L-Tsch}, \S\,2.4; \citenum{Santens}, \S\,6; \citenum{Wilsch-toric}, \S\,2.1.3].

Note that the intersection $X(k)\cap \bX$ is always empty (if $D\ne \emptyset$), as the archimedean and nonarchimedean places induce incompatible conditions. If $\bx$ is the diagonal image of a rational point, the integral analog to simultaneous approximation can thus either mean to approximate $x\in \mU(\fo)$ over a finite set of finite places or to approximate a point $x\in D(k)$ that lies on $Z_{D,v}$ for all $v$ over a set $S$ of archimedean places.

\begin{remark}
  One can analogously define an approximation constant for $S$-integral points, where $S$ is a finite set of places containing $\infty_k$. In such a setting, places in $S$ should be treated like archimedean places in the discussion above.
\end{remark}

\subsection{The scope of this article}
Our main focus is to understand approximation constants over a single archimedean place --- over such a place, most of the phenomena that set integral points apart from rational ones are visible.

The behavior of approximation constants for nonarchimedean places differs yet again in several ways. Let $v$ be a finite place. The set $\mU(\fo)$ (if nonempty) is contained in the open-closed subset $\mU(\fo_v)$ of $X(k_v)$, so that the approximation constant is infinite for points in its complement. If strong approximation holds for $\mU$ away from a set of places not containing $v$, then $\mU(\fo)$ is dense in $\mU(\fo_v)$, so that every local integral point $x\in \mU(\fo_v)$ lies in the $v$-adic closure of $\mU(\fo)\setminus \{x\}$. More generally, the integral version of Manin's conjecture due to Santens~\cite{Santens} predicts equidistribution (hence density) in a closed subset of $\bX$ \eqref{eq:X} explicitly described in terms of two obstructions. 
In particular, if $x\in \mU(\fo)$ is a global integral point, it would then automatically lie in the closure of $\mU(\fo)\setminus \{x\}$ in $X(\RA_k^{\infty})$.
In any case, a natural question seems to be how well an integral point in $\mU(\fo)$ can be approximated by other integral points with respect to nonarchimedean absolute values.
We plan to address this problem more thoroughly in future investigations and content ourselves with this very brief discussion for now.

\section{Approximation constants of rational curves}\label{se:ratcurve}
The aim of this section is to compute integral approximation constants of log rational and toroidal curves, using the parametrizations obtained in Section~\ref{sec:schemes-and-curves}.
As in the work of McKinnon and Roth~\cite[Thm.~2.16]{McK-Roth}, they will involve the integer 
\begin{equation*}
	r_{y,v} = \begin{cases}
	0 & \text{if $k(y)\nsubseteq k_v$,} \\
	1 & \text{if $k(y) = k$,} \\
	2 & \text{if $k\subsetneq k(y) \subseteq k_v$,}
	\end{cases}
\end{equation*}
defined for a closed point $y$ on a $k$-variety $X$ and a place $v$ of $k$. This integer will appear in denominators; if it is zero, the resulting quotient should be interpreted as $\infty$.
 
\begin{proposition}[Approximation constants for log rational curves]\label{prop:approximation-log-rational}
    Let $\mC$ be a log rational curve on a nice log scheme $(\mX, \mD)$. Assume that $\mC_0(\fo)$ contains a generically regular point. Let $L$ be a line bundle such that $\deg_L(C)> 0$, $v$ be an archimedean place of $k$, and $x\in (C\cap D)(k)$. Then
	\begin{equation*}
		\alpha_v(x, \mC_0; L) = \frac{\deg_{L}(C)}{\mult_x(C)},
	\end{equation*}
    where $\mult_x(C)$ denotes the multiplicity of $C$ at the point $x$.
\end{proposition}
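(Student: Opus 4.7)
The plan is to exploit the parametrization $\phi\colon \dA^1_\fo \to \mC_0$ provided by Proposition~\ref{prop:a1para}. Its generic fiber extends to the normalization $\phi\colon \PP^1_k \to C$. Since $\phi(\dA^1)\subset C_0$, the preimage $\phi^{-1}(D)$ is disjoint from $\dA^1$; combined with $\deg \phi^{-1}(D)\le 1$ and the existence of $x\in(C\cap D)(k)$, this forces $\phi^{-1}(x) = \phi^{-1}(D) = \{\infty\}$, a single rational point. Consequently $x$ has a single analytic branch on $C$, whose multiplicity equals $m := \mult_x(C)$. Setting $d = \deg_L(C)$, the pullback $\phi^*L$ on $\PP^1$ is isomorphic to $\cO_{\PP^1}(d)$.

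The next step is to compute how $d_v$ and $H_L$ scale along $\phi$. Combining~\eqref{eq:distance_A1}, applied in the chart $s = 1/t$ around $\infty$, with the ramification identity~\eqref{eq:distance_singularity}, one obtains
\[
	d_v(x, \phi(t)) \asymp |t|_v^{-m}
\]
for $t\in k_v$ with $|t|_v$ large, while functoriality of heights yields $H_L(\phi(t)) \asymp \prod_{w\in \Omega_k} \max(1, |t|_w)^d$ for $t\in k$, since the homogeneous degree-$d$ polynomials defining $\phi$ have no common zero.

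For the lower bound $\alpha_v \ge d/m$: every $t\in\fo$ satisfies $|t|_w \le 1$ at finite places, so $H_L(\phi(t)) \ge \max(1, |t|_v)^d$; an approximating sequence must have $|t|_v\to \infty$, yielding $d_v(x,\phi(t))^\alpha H_L(\phi(t)) \gg |t|_v^{d-\alpha m}$, which is unbounded unless $\alpha \ge d/m$. Integral points not in the image of $\phi$ on $\dA^1(\fo)$ form a finite set, as $\phi$ is birational on generic fibers, so they do not affect the constant.

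For the matching upper bound $\alpha_v \le d/m$, it suffices to construct a sequence $(t_n)\subseteq \fo$ with $|t_n|_v \to \infty$ and $|t_n|_w \le 1$ for every $w\in \infty_k \setminus \{v\}$: for such points $H_L(\phi(t_n)) \asymp |t_n|_v^d$ matches the lower bound exactly. When $k$ has a single archimedean place (i.e., $k=\QQ$ or $k$ imaginary quadratic) the choice $t_n = n \in \ZZ_{>0}$ suffices. In general the main obstacle is this construction, which invokes Dirichlet's unit theorem: the logarithmic unit lattice has rank $|\infty_k|-1$ in the trace-zero hyperplane of $\RR^{\infty_k}$, and one must show it intersects the cone with strictly positive $v$-coordinate and non-positive other archimedean coordinates. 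This cone is nonempty in the hyperplane precisely when $|\infty_k|\ge 2$, and a full-rank lattice necessarily meets its interior, producing the required sequence.
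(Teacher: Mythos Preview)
Your upper bound is correct and close in spirit to the paper's. The paper also constructs a sequence $(a_n)\subset \fo$ with $|a_n|_v\to\infty$ and $|a_n|_{v'}\ll 1$ for $v'\in\infty_k\setminus\{v\}$, but does so more directly: the condition on the remaining archimedean places cuts out a tube in $k\otimes_\QQ\RR$ of infinite volume, which therefore contains infinitely many lattice points. Your route through Dirichlet's unit theorem works too, but is more than what is needed here.

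Your lower bound, however, contains a genuine gap. The claim that $\mC_0(\fo)\setminus\phi(\dA^1(\fo))$ is finite does \emph{not} follow from $\phi$ being birational on generic fibers: the construction in Proposition~\ref{prop:a1para} proceeds by precomposing the normalization with multiplication by a highly divisible constant, so $\phi(\fo)$ may well miss infinitely many integral points of $\mC_0$ (think of $t\mapsto Nt$ on $\dA^1_\ZZ$ itself). Thus an approximating sequence in $\mC_0(\fo)$ cannot in general be written as $\phi(t_n)$ with $t_n\in\fo$, and your inequality $H_L(\phi(t))\ge |t|_v^d$, which you derived only for $t\in\fo$, does not directly apply to all of $\mC_0(\fo)$.

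The repair is easy and is what the paper does: since $\mC_0(\fo)\subset C(k)$, one has $\alpha_v(x,\mC_0;L)\ge\alpha_v(x,C;L)$, and the latter rational approximation constant is exactly $\deg_L(C)/\mult_x(C)$ by McKinnon--Roth~\cite[Thm.~2.16]{McK-Roth}. Alternatively, within your own framework, simply observe that every regular $y\in C_0(k)$ is $\phi(t)$ for some $t\in k$, and the height bound $H_L(\phi(t))\asymp\prod_w\max(1,|t|_w)^d\ge |t|_v^d$ holds for all such $t$, not just $t\in\fo$; the Liouville-type inequality then covers every approximating sequence in $\mC_0(\fo)$.
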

\begin{proof}
	We write $m_x=\mult_x(C),d = \deg_L(C)$ for simplicity. We begin by observing $\alpha_v(x,\mC_0;L)\ge \alpha_v(x,C;L)$; as the latter constant is the rational approximation constant on $C$, which has been computed by work of McKinnon and Roth~\cite[Thm.~2.16]{McK-Roth} and equals $\frac{d}{m_x}$ (note that $C$ is automatically unibranched as a consequence of its log rationality), this provides the value as a lower bound.

    To get an upper bound, let $\phi\colon\dA^1_{\fo}\to\mC_0$ be a birational morphism as in Proposition~\ref{prop:a1para}, and denote the extension $\PP^1_k\to C$ on generic fibers again by $\phi$ (with $\phi(\infty)=x$).  
    Let $(a_n)_n$ be a sequence of integers in $\fo$ with $|a_n|_v\to \infty$ but $|a_n|_{v'} \ll 1$ for all $v'\in \infty_k\setminus \{v\}$ (the latter condition defines a tube in $k\otimes_\QQ \RR$, hence contains infinitely many elements of the integer lattice).  Then, appealing to~\eqref{eq:distance_singularity}, we can estimate
	\begin{equation*}
		d_v(x, \phi(a_n)) \asymp d_v(\infty,a_n)^{m_x} \asymp |a_n|_v^{-m_x},
	\end{equation*}
	while
	\begin{equation*}
		H_L(\phi(a_n)) = H_{\phi^*L}(a_n) \asymp \prod_{v'} \max\{1,|a_n|_{v'}^d\} \ll |a_n|_{v}^{d},
	\end{equation*}
	so that
	\begin{equation*}
		d_v(x,\phi(a_n))^{d/m_x} H_L(\phi(a_n))
	\end{equation*}
	is bounded, which shows that $\frac{d}{m_x}$ is also an upper bound.
\end{proof}

	\begin{remark}
        In the setting of Proposition~\ref{prop:approximation-log-rational}, it is easy to deduce the same result for the simultaneous approximation constant $$\alpha_{\infty_k}(\Delta_{\infty_k}(x),\mC_0;L)=\frac{\deg_{L}(C)}{\mult_x(C)}.$$
	\end{remark}

The computation of approximation constants on toroidal curves will use the following observations on the geometry of numbers.
\begin{lemma}\label{lem:geom-numbers}
	Let $n\ge 1$ and $\Lambda$ be a lattice in $\RR^n$ of full rank. Then for every $\delta_1,\delta_2>0$, there is a lattice point $x\in \Lambda \cap [\delta_1^{-1},\infty]\times [-\delta_2,\delta_2]^{n-1}$.
\end{lemma}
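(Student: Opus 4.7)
My plan is to split into cases according to whether $\Lambda$ meets the first coordinate axis nontrivially. Let $\pi\colon\RR^n\to\RR^{n-1}$ denote the projection $(x_1,\dots,x_n)\mapsto(x_2,\dots,x_n)$, and set $\Lambda_1=\Lambda\cap\ker\pi$. The subgroup $\Lambda_1$ is discrete in $\RR e_1\cong\RR$, hence of the form $c\ZZ\cdot e_1$ for some $c\ge 0$. If $c>0$, then $(\lceil 1/(c\delta)\rceil c,0,\dots,0)\in\Lambda$ already lies in $[\delta^{-1},\infty)\times[-\delta,\delta]^{n-1}$, and we are done.

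Suppose then $c=0$. The restriction $\pi|_\Lambda$ is then injective, so $\pi(\Lambda)\subseteq\RR^{n-1}$ is a subgroup of rank $n$. Since any discrete subgroup of $\RR^{n-1}$ has rank at most $n-1$, the image $\pi(\Lambda)$ cannot be discrete, and I would deduce that $0$ is an accumulation point. From this I extract a sequence of nonzero lattice points $y^{(k)}\in\Lambda$ with $\pi(y^{(k)})\to 0$ and $\pi(y^{(k)})\ne 0$; by injectivity of $\pi|_\Lambda$ these are pairwise distinct, so discreteness of $\Lambda\subset\RR^n$ forces $\norm{y^{(k)}}\to\infty$. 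Combined with $\pi(y^{(k)})\to 0$, this means $|y_1^{(k)}|\to\infty$. For $k$ sufficiently large, $|y_1^{(k)}|\ge 1/\delta$ and $|y_i^{(k)}|\le\delta$ for $i\ge 2$, and replacing $y^{(k)}$ by its negative if necessary yields the required lattice point.

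The only nontrivial ingredient is the classical fact that a discrete subgroup of $\RR^m$ has rank at most $m$, which is what rules out discreteness of $\pi(\Lambda)$ in the second case; everything else --- the extraction of a subsequence with unbounded first coordinate and a final sign flip --- is elementary, so I do not anticipate any serious obstacle. A fully Minkowski-based proof would also work, applied to the box $[-L,L]\times[-\delta,\delta]^{n-1}$ with $L\ge\mathrm{covol}(\Lambda)/\delta^{n-1}$, but one would then need a separate argument to show that the resulting lattice point can be chosen with $|y_1|\ge 1/\delta$, which is essentially the case distinction above.
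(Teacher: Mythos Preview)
Your proof is correct and follows essentially the same route as the paper's: split on whether $\ker\pi|_\Lambda$ is trivial, and in the nontrivial case use that $\pi(\Lambda)$ has rank $n$ in $\RR^{n-1}$, hence is non-discrete, to produce infinitely many lattice points with small projection and then force the first coordinate to be large by discreteness of $\Lambda$. The only cosmetic point is that when you invoke injectivity of $\pi|_\Lambda$ to conclude the $y^{(k)}$ are pairwise distinct, you are implicitly choosing the $\pi(y^{(k)})$ to be pairwise distinct (which is easy since $0$ is an accumulation point); making this explicit would remove any ambiguity.
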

\begin{proof}
Let $\pi\colon \RR^n\to \RR^{n-1}$ be the projection onto the last coordinates. If $\ker \pi|_\Lambda \ne 0$, then a sufficiently large element of this kernel has the desired property. If not, then the rank of $\pi(\Lambda)$ is larger than $n-1$, hence $\pi(\Lambda)$ is not discrete. Let $y\in \RR^{n-1}$ be an accumulation point of $\pi(\Lambda)$ and $y'\in \pi(\Lambda)$ with $|y-y'|<\delta_2$. Then $y-y'$ is an accumulation point as well, and there are infinitely many points in $\pi(\Lambda)\cap [-\delta_2,\delta_2]^{n-1}$, hence so are there in $\Lambda \cap \RR \times [-\delta_2,\delta_2]^{n-1}$. As $\Lambda$ is discrete and $K=[-\delta_1^{-1},\delta_1^{-1}]\times [-\delta_2,\delta_2]^{n-1}$ is compact, only finitely many of those can lie in $K$. Therefore, we can find a point $x$ outside $K$, which is as claimed after multiplying with a suitable sign.
\end{proof}

\begin{lemma}\label{lem:units-of-norm-1}
	Let $K/k$ be a quadratic extension and $U$ be a subgroup of full rank of the group $\fo_K^1$ of units of norm $1$. Let $v_0$ be a place of $k$ such that $K\subset k_v$; let $w_0$ be a place above $v_0$. For every $\delta>0$, there is a unit $\epsilon\in U$ such that $|\epsilon|_{w_0}<\delta$ and $1/(1+ \delta) < |\epsilon|_{w} < 1+\delta$ for archimedean places $w$ with $w\nmid v_0$.
\end{lemma}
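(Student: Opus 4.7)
The plan is to convert this into a lattice problem via the logarithmic embedding
\[
	\ell \colon \fo_K^1 \to \prod_{w \in \infty_K} \RR, \qquad \epsilon \mapsto (\log |\epsilon|_w)_w,
\]
and then apply Lemma~\ref{lem:geom-numbers}. For $\epsilon \in \fo_K^1$ and any archimedean $v$ of $k$, the norm relation $\prod_{w\mid v}|\epsilon|_w = |N_{K/k}(\epsilon)|_v = 1$ gives $\sum_{w\mid v}\log|\epsilon|_w = 0$, so $\ell(\fo_K^1)$ lies in the subspace $V \subseteq \prod_{w\in \infty_K}\RR$ cut out by these relations. At each inert archimedean $v$, the unique coordinate above it is forced to vanish; at each split $v$, the two coordinates above $v$ are negatives of each other. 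In particular $\dim V = \#\infty_K - \#\infty_k$, and by~\eqref{eq:norm-1-rank}, $\ell(\fo_K^1)$ is a full-rank lattice in $V$; so is $\ell(U)$, since $U$ has finite index in $\fo_K^1$.

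Next I would select a representative place above each split archimedean place of $k$: namely $w_0$ above $v_0$ together with one of the two places above each other split $v$. Projection onto these $n = \#\infty_K - \#\infty_k$ coordinates defines an isomorphism $\pi \colon V \xrightarrow{\sim} \RR^n$ with $w_0$ as the first coordinate, and $\pi(\ell(U))$ is a full-rank lattice in $\RR^n$. Applying Lemma~\ref{lem:geom-numbers} to this lattice after negating the first coordinate, with a parameter $\delta' > 0$ chosen small enough that $(\delta')^{-1} \ge |\log\delta|$ and $\delta' \le \log(1+\delta)$, produces $\epsilon \in U$ satisfying $\log|\epsilon|_{w_0} \le -(\delta')^{-1} \le \log\delta$ and $\bigl|\log|\epsilon|_w\bigr| \le \delta'$ at each of the other chosen representatives. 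The opposite place above each split $v \ne v_0$ satisfies the same bound by the sign relation, and at inert archimedean places $|\epsilon|_w = 1$ automatically. Thus $|\epsilon|_{w_0} < \delta$ and $1/(1+\delta) < |\epsilon|_w < 1+\delta$ for every archimedean $w \nmid v_0$.

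The step demanding the most care is the case analysis over split and inert archimedean places needed to identify $V$ with $\RR^n$ through the projection $\pi$; once that bookkeeping is in place, the conclusion follows by a direct application of Lemma~\ref{lem:geom-numbers}.
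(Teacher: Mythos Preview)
Your proof is correct and follows essentially the same route as the paper's: logarithmic embedding, identification of the subspace $V$ via the norm relations with the split/inert case analysis, recognition that $\ell(U)$ is a full-rank lattice there, and a direct appeal to Lemma~\ref{lem:geom-numbers}. The only cosmetic difference is that the paper passes to $\RR^n$ via an explicit basis $(b_v)_{v\in\infty_k^{\mathrm s}}$ rather than your projection onto one chosen place above each split $v$, but these are the same linear isomorphism.
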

\begin{proof}
	Let 
	\begin{align*}
		\phi \colon \fo_K^\times &\to \RR^{\infty_K},\\\epsilon & \mapsto (\log |\epsilon|_w)_{w\in \infty_K};
	\end{align*}
	we shall begin by describing the image of $\fo_K^1$ under this map by means of a linear subspace.
	The set of archimedean places $\infty_k$ is the union of splitting places $\infty_k^{\mathrm s}$ and inert places $\infty_k^{\mathrm i}$. 

	Let $v\in \infty_k^{\mathrm s}$ be one of the former for now, that is, a place that has two different places $w_1$ and $w_2$ lying above it; note that this case includes $v=v_0$ by assumption, and in this case, label $w_2 = w_0$.
	The two places $w_1$ and $w_2$ are swapped by the nontrivial element $\sigma$ of the Galois group $\Gal(K/k)$, hence
	\begin{equation*}
		1 = |N(\epsilon)|_{w_1} = |\epsilon|_{w_1}|\sigma(\epsilon)|_{w_1} = |\epsilon|_{w_1}|\epsilon|_{w_2}.
	\end{equation*}
	Logarithmically, this translates to
	$f_{v}(\phi(\epsilon))=0$, where
	\begin{equation*}
		f_{v} = x_{w_1}+x_{w_2}\colon \RR^{\infty_K}\to \RR
	\end{equation*}
	is the sum of the two projections.
	Let $b_{v}\in\RR^{\infty_K}$ be the vector with $1$ in the $w_1$-entry and $-1$ in the $w_2$-entry.

	Let now $v\in \infty_k^{\mathrm i}$ be an inert archimedean place and $w$ be the only place above $v$. Then $\RR\cong k_{v} \subset K_{w}\cong \CC$, and $\sigma$ acts by complex conjugation, so that
	\begin{equation*}
		1 = |N(\epsilon)|_{w} = |\epsilon\overline{\epsilon}|_w.
	\end{equation*}
	This implies $f_{v}(\phi(\epsilon))=0$, where $f_{v} = x_w\colon \RR^{\infty_K}\to \RR$ is the projection.
	
    Let $H\subset \RR^{\infty_K}$ be the linear subspace cut out by the $\# \infty_k$ linearly independent linear forms $(f_{v})_{v\in\infty_k}$. It is thus of dimension $\#\infty_K - \#\infty_k$ with basis $(b_{v})_{v\in \infty_k^{\mathrm s}}$.
    Note that $H$ contains the sublattice $\phi(\fo_K^1)$ of $\phi(\fo_K^\times)$, which is of full rank by~\eqref{eq:norm-1-rank}.
	In the basis $(b_{v})_{v\in\infty_k^{\mathrm s}}$, ordered so that $v_0\in \infty_k^{\mathrm s}$ comes first, the desired properties for the absolute values translate to
	\begin{equation*}
		\phi(\epsilon) \in [- \log \delta ,\infty] \times [-\log (1+\delta), \log(1+\delta)]^{\#\infty_k^s -1}.
	\end{equation*}
	Now the claim follows from Lemma~\ref{lem:geom-numbers}.
\end{proof}
    
\begin{proposition}[Approximation constants for toroidal curves]\label{prop:approximation-toroidal}
Let $\mC$ be a toroidal curve on a nice log scheme  $(\mX,\mD)$ such that $\#\mC_0(\fo) = \infty$.
Let $x\in (C\cap D)(k)$,  $v\in\infty_k$, and $L$ be a line bundle on $X$ with $\deg_L(C) > 0$.
\begin{enumerate}
 \item If $C$ is not nodal, then
	\begin{equation*}
		\alpha_v(x,\mC_0;L) = \frac{\deg_{L}(C)}{\mult_{x}(C)}.
	\end{equation*}
    \item If $C$ is nodal, let $C_1$ and $C_2$ be its two branches corresponding to $y_1$ and $y_2$ respectively. Then
	\begin{equation*}
	\alpha_v(x,\mC_0;L) =
        \frac{\deg_{L}(C)}{\max_{i=1,2}\{r_{y_i,v}\mult_{x}(C_i)\}}.
	\end{equation*}
\end{enumerate}  
\end{proposition}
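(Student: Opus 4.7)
The plan is to adapt the strategy from Proposition~\ref{prop:approximation-log-rational} by tracking distances and heights along the torus parametrization $\tau\colon T\to C_0$ from Lemma~\ref{le:torus-para-rational}, using Proposition~\ref{prop:torus-para-integral} to confine integral points to finitely many cosets of $\mT(\fo)$ in $T(k)$. It suffices to work within a single such coset $c\mT(\fo)$ for some $c\in T(k)$, and to control, for $\epsilon\in\mT(\fo)$, both the distance $d_v(x,\tau(c\epsilon))$ and the height $H_L(\tau(c\epsilon))=H_{\phi^*L}(c\epsilon)\asymp\prod_{v'}\max(|c\epsilon|_{v'},1)^d$, where $d=\deg_L(C)$. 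In case (i), if $K\neq k$ then the two boundary points on $C$ have residue field $K$, so $(C\cap D)(k)=\emptyset$ and the statement is vacuous; we may thus assume $K=k$ in case (i), and this case becomes subsumed by case (ii) with only one branch meeting $x$.

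For the lower bound, combine the distance estimate $d_v(x,\tau(c\epsilon))\asymp d_v(y_i,c\epsilon)^{m_i}$ from~\eqref{eq:distance_singularity} with a height estimate
\[
H_L(\tau(c\epsilon))\gg d_v(y_i,c\epsilon)^{-d/r_{y_i,v}}
\]
for each branch $i$ with $r_{y_i,v}>0$. The latter follows from the product formula together with the fact that finite-place contributions are uniformly bounded on $\mT(\fo)$: when $r_{y_i,v}=1$ (i.e., $K=k$), this is essentially that $H_L\geq|c\epsilon|_v^{-d}$ when $|c\epsilon|_v$ is small; when $r_{y_i,v}=2$ (i.e., $K/k$ quadratic and split at $v$), a single norm-one unit $\epsilon$ has two local avatars $\epsilon_{w_1},\epsilon_{w_2}\in k_v^\times$ with $\epsilon_{w_1}\epsilon_{w_2}=1$, so the large and small components automatically balance and only half of the logarithm of the distance is ``paid'' in the height, producing the factor $r=2$. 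Combining, the product $d_v(x,\tau(c\epsilon))^\alpha H_L(\tau(c\epsilon))\gg d_v(y_i,c\epsilon)^{\alpha m_i-d/r_{y_i,v}}$ is unbounded along every sequence approaching $y_i$ as soon as $\alpha<d/(r_{y_i,v}m_i)$; since this must hold for every reachable branch simultaneously, $\alpha_v(x,\mC_0;L)\geq d/\max_i r_{y_i,v}\mult_x(C_i)$.

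For the upper bound, fix the branch $i_0$ realizing the maximum of $r_{y_i,v}\mult_x(C_i)$ and construct an approximating sequence. In the quadratic case, apply Lemma~\ref{lem:units-of-norm-1} (choosing the place $w_0\mid v$ so that $w_0$ corresponds to $y_{i_0}$ under the local identification $T_{k_v}\cong\Gm$) to obtain a sequence of units whose $w_0$-absolute value tends to $0$ with all others bounded; in the $K=k$ case, a direct application of Lemma~\ref{lem:geom-numbers} inside the hyperplane $\{\sum_{v'\in\infty_k}x_{v'}=0\}\subset\RR^{\infty_k}$ produces the analogous sequence in $\fo_k^\times$. Along such a sequence, the inequalities from the lower-bound argument become equalities up to constants, so $d_v(x,\tau(c\epsilon))^{d/(r_{y_{i_0},v}\mult_x(C_{i_0}))}H_L(\tau(c\epsilon))=O(1)$ and the matching upper bound follows. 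The main obstacle I anticipate is making the height estimate $H_L(\tau(c\epsilon))\asymp d_v(y_{i_0},c\epsilon)^{-d/r_{y_{i_0},v}}$ precise in the $K\neq k$ case: this requires carefully unpacking how the single $k$-rational point $\tau(c\epsilon)\in\PP^1(k)$ simultaneously encodes both local components of $\epsilon$ at the split place, and is most naturally carried out in the coordinate $w=(z-y_1)/(z-y_2)$ that trivializes the torus action at $v$, relating the bound to the Thue-like exponent $1/2$ governing rational approximations to quadratic irrationals.
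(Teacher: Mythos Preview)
Your upper-bound construction is essentially the paper's: in the split case you pick a unit $\epsilon\in\fo^\times$ with $|\epsilon|_v<1$ and other archimedean absolute values controlled, and in the quadratic case you invoke Lemma~\ref{lem:units-of-norm-1} and compute the height via $H_{L_K}$, taking a square root to pass back to $H_L$. That part is fine.

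The substantive divergence is in the lower bound. You attempt to prove
\[
H_L(\tau(c\epsilon))\gg d_v(y_i,c\epsilon)^{-d/r_{y_i,v}}
\]
directly from the product formula and the coset structure of Proposition~\ref{prop:torus-para-integral}, and you flag the $K\ne k$ case as the main obstacle. This is unnecessary work. The paper obtains the lower bound in one line: since $\mC_0(\fo)\subseteq C(k)$, one has $\alpha_v(x,\mC_0;L)\ge\alpha_v(x,C;L)$, and the right-hand side is exactly the rational approximation constant on $C$, already computed by McKinnon--Roth~\cite[Thm.~2.16]{McK-Roth} to be $\deg_L(C)/\max_i r_{y_i,v}\mult_x(C_i)$. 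So your ``main obstacle'' dissolves entirely once you remember that the rational constant is known. Your direct argument could presumably be made to work, but it re-derives a special case of McKinnon--Roth's theorem inside this proof, which is both harder and less transparent.

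One genuine gap: you do not treat the case where $C$ is nodal, $K\ne k$, and $K\not\subseteq k_v$ (so $r_{y_i,v}=0$ for both $i$). Here the claimed value is $\infty$, and this requires an argument: the paper observes that $y_1,y_2\in\PP^1(Kk_v)\setminus\PP^1(k_v)$ admit compact neighbourhoods disjoint from $\PP^1(k_v)$, so a neighbourhood of $x$ in $C(k_v)$ contains no $k$-rational points at all, forcing $\alpha_v=\infty$. Your proposal skips over this, and it does not fall out of the height inequality you wrote down (which is vacuous when $r_{y_i,v}=0$).
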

\begin{proof}
	As $\alpha_v(x,\mC_0;L) \geq\alpha_v(x,C;L)$, all expressions on the right hold as a lower bound by~\cite[Thm.~2.16]{McK-Roth}, so that we are only left to construct sufficiently good approximating sequences to provide upper bounds. 
 	Let $K$ be the splitting field of the toroidal curve $C$, $\phi\colon \PP^1\to C$ be its normalization, and write $\{y_1,y_2\}=\phi^{-1}(D)\subset \PP^1(K)$ for the (Galois invariant) preimage of the boundary. Let $\tau\colon T=\PP^1\setminus \{y_1,y_2\}\to C_0$ be the torus of $C$. 

	We start with the non-nodal case, in which case $K=k$.
    By applying a linear change of variables and appealing to Proposition~\ref{prop:torus-para-integral}, we may assume that $y_1=0$, $y_2=\infty$, $\phi(y_1)=x$, and that a subgroup $G$ of $\Gm(\fo)=\fo^\times$ of full rank is mapped into $\mC_0(\fo)$ by $\tau$. Let $\epsilon$ be an element of this subgroup of infinite order with $|\epsilon|_v<1$ and $|\epsilon|_{v'}>1$ for all $v'\in \infty_k\setminus\{v\}$ (which exists by Dirichlet's unit theorem), and consider the sequence $(\phi(\epsilon^k))_{k\in\NN}$. An argument analogous to that in the proof of 
	Proposition~\ref{prop:approximation-log-rational} now yields the claimed constant.

	Assume from now on that $C$ is nodal. Note that both points $y_1,y_2$ have the same residue field $K$. Write for simplicity $d= \deg_L(C)$ and  $m_{y_i}=\mult_{x}(C_i)$.  If $K=k$ (and in particular, $r_{{y_1},v}=r_{{y_2},v}=1$), with the same convention $y_1=0$, $y_2=\infty$ and the same choice of $\epsilon$ as in the non-nodal case result in the distances
	\begin{equation*}
		d_v(x,\tau(\epsilon^n)) \ll d_v(0,\epsilon^{n})^{m_{y_1}} = |\epsilon|_v^{n m_{y_1}}
	\end{equation*}
	and 
	\begin{equation*}
		d_v(x,\tau(\epsilon^{-n})) \ll d_v(\infty,\epsilon^{-n})^{-m_{y_2}} = d_v( 0,\epsilon^n)^{m_{y_2}} = |\epsilon|_v^{n m_{y_2}},
	\end{equation*}
	while its height is $$H_L(\epsilon^n) \asymp \prod_{v'}\max\{1,|\epsilon^n|_{v'}\}^d = \prod_{v'\ne v} |\epsilon^n|_{v'}^d = |\epsilon|_v^{-nd}$$ by the product formula,  and $H_L(\epsilon^{-n}) \asymp |\epsilon|^{-dn}_v$ by definition, achieving the claimed constant.

	We are thus only left with the case $k\subsetneq K$ (with $[K:k]=2$).
	For now, assume that $K\nsubseteq k_v$, so that the compositum $Kk_v/k_v$ is a quadratic extension of local fields, and $r_{y_i,v}=0$. Then $y_1$ and $y_2$ lie in the open set $\PP^1(Kk_v)\setminus \PP^1(k_v)$, and each of them contains a sufficiently small compact neighborhood that does not meet $\PP^1(k_v)$ (as else $\phi^{-1}(x)$ would meet $\PP^1(k_v)$). 
    This implies that there is a neighborhood $V$ of $x$ in $C(k_v)$ with $V\cap \mC_0(\fo)=\emptyset$, where  $d_v(x, \cdot)$ is bounded from below, and hence the approximation constant is infinite.

	We may thus finally assume $K\subseteq k_v$, in which case $r_{{y_1},v}=r_{{y_2},v}=2$.  There is a $d\in k^\times$ such that $K=k(\sqrt{d})$ and $\sqrt{d}\in k_v$, that is, $K_w = k_v$ for $w\mid v$. Let $w_1$ and $w_2$ be the two places above $v$.
	By Proposition~\ref{prop:torus-para-integral}, $T$ is a norm-one torus and that a subgroup $G$ of $\mT(\fo)$ of full rank is mapped into $\mC_0(\fo)$ under $\tau$. Let $\lambda$ be a linear change of variables of $\PP^1_K$ mapping $0$ to $y_1$, $\infty$ to $y_2$ and $\mT(\fo)$ to the norm-1-units $\fo_K^1\subset\Gm(\fo_K)\subset \PP^1(K)$.
	Composing with $\phi$ we get a morphism $\psi = \phi\circ\lambda\colon \PP^1_K\to C$ with $\psi(0)=\psi(\infty)=x$. For every $\delta>0$, let $\epsilon_\delta\in G$ be a unit with $|\epsilon_\delta|_{w_1}<\delta$ and $1/(1+\delta)< |\epsilon_\delta|_{w'}<1+\delta$ for all $w'\in \infty_K\setminus \{w_1,w_2\}$ as constructed in Lemma~\ref{lem:units-of-norm-1}. Computing $d_w(0,\epsilon_\delta^n)$ as before and appealing to~\eqref{eq:distance_extension} with $[K_w:k_v]=1$, we again get
	\begin{equation*}
		\begin{aligned}
			d_v(x, \tau(\lambda^{-1}(\epsilon_\delta))) &= d_{w_1}(x, \tau(\lambda^{-1}(\epsilon_\delta))) \asymp d_{w_1}(\psi(0),\psi(\epsilon_\delta)) \\
			&\ll  |\epsilon_\delta|_v^{m_{y_1}}
			\ll \delta^{-m_{y_1}}.
		\end{aligned}
	\end{equation*}
	Computing the $L_K$-height yields
	\begin{equation*}
		\begin{aligned}
			H_{L_K}(\tau(\lambda^{-1}(\epsilon_\delta^n))) & = \prod_{w'\in \infty_K} \max\{1,|\epsilon_\delta|_w\}^d \ll |\epsilon_\delta|_{w_2}^d (1+O(\delta)) \\
			& = |\epsilon_\delta|_{w_1}^{-d} (1+O(\delta)),
		\end{aligned}
	\end{equation*}
	whence
	\begin{equation*}
		H_{L}(\tau(\lambda^{-1}(\epsilon_\delta))) = H_{L_K}(\tau(\lambda^{-1}(\epsilon_\delta)))^{1/2} = |\epsilon_\delta|_{w_1}^{-d/2} (1+O(\delta)),
	\end{equation*}
	achieving the claimed approximation constant and finishing the proof.
\end{proof}

    \begin{remark}
		In stark contrast to log rational curves, the simultaneous approximation constant $\alpha_{\infty_k}(\Delta_{\infty_k}(x),\mC_0;L)$ can differ dramatically from the one with respect to a fixed place, depending on whether the toroidal curve $C$ is nodal or not. Keep all notation from Proposition~\ref{prop:approximation-toroidal} and its proof, and let $t\in \fo_K[\mT]\cong \fo_K[\Gm]= \fo_K[t,t^{-1}]$ be the nonconstant invertible regular function on $\mT$ vanishing in $y_1=0$ (hence measuring the distance to $y_1$). 	 
        \begin{enumerate}
            \item On nodal curves, note that the distances for places $w$ of $K$ behave like
		\begin{equation*}
			d_w(\tau(y),x) \asymp \min\{|f(y)|_w, |f(y)|^{-1}_w\}
		\end{equation*} for $y\in T$,
		as $x$ can be approximated by approximating either $0$ or $\infty$ on $\PP_K^1\supset T_K$, and an argument analogous to that in Proposition~\ref{prop:approximation-toroidal} shows that $\alpha_{\infty_k}(\Delta_{\infty_k}(x),\mC_0;L)$ is finite. 
        \item The behavior on non-nodal curves changes, though.	 In this case, the local distance functions to $x=\tau(y_1)\ne \tau(y_2)$ for places $w$ of $K$ behave like
        \begin{equation*}
			d_w(\tau(y),x) \asymp \min\{|f(y)|_w, 1\}
		\end{equation*}
		for $y\in \mT(k)$. Moreover, for $z\in T(k)$ and $y=zy'\in z\mT(\fo)$ in the corresponding coset,
		\begin{equation*}
			\prod_{w\in \infty_K} |f(zy')|_w = \prod_{w\notin\infty_K} |f(z)f(y')|_w^{-1} \gg_z 1,
		\end{equation*}
		the total distance is bounded from below on every coset:
		\begin{equation*}
			d_{\infty_k}(\tau(y),x) \asymp \sum_{v\in \infty_K} \prod_{w\mid v} |f(zy)|_w \gg_z 1.
		\end{equation*}
		As finitely many cosets sweep $\mC_0(\fo)$, it follows that the constant $\alpha_{\infty_k}(\Delta_{\infty_k}(x),\mC_0;L)$ is infinite for non-nodal toroidal curves. This in particular holds true for $\alpha_\infty$, where $\infty$ is the unique archimedean place of the rationals or an imaginary quadratic field --- but note that in this case, the number of integral points on a non-nodal toroidal curve is already finite by Corollary~\ref{cor:torus-infinite-numberfield}.
        \end{enumerate}	
    \end{remark}

\subsection{A conjecture}
Fano varieties contain an abundance of rational curves over an
algebraic closure, being geometrically rationally
connected~\cite{MR1191735,MR1158625}. For this geometric reason, it is expected that rational points tend to be abundant on Fano varieties: a qualitative conjecture
of Colliot-Thélène \cite[Conj.~14.1.2]{C-TSKbook} implies weak weak approximation (that is, weak approximation away from some finite set of places) and in
particular, that the set of rational points is not thin as soon as it is nonempty.
Moreover, Manin's conjecture~\cite{MR974910} was proposed, measuring quantitatively how rational points of bounded anticanonical height grow.

Weak Fano varieties, whose anticanonical bundle is big and nef, but not necessarily ample,
are a slight generalization, and this property of the anticanonical bundle appears
in variants of Manin's problem, e.g.~\cite{MR4472281} --- the associated heights almost satisfy the Northcott property, except that they might stay bounded
on an infinite set of points on strict subvarieties.
The logarithmic analog to these varieties thus seems like a natural candidate to
expect approximation constants to be governed by rational curves.

\begin{definition}
	A \emph{weakly log Fano scheme} over $\fo$ is a nice log scheme $(\mX, \mD)$  such that the log anticanonical class $-K_X(D) = -K_X-D$ of its generic fiber $(X, D)$ is big and nef.
\end{definition}

 It follows from \cite[Cor.~5.4]{KeelMcKernan} that whenever $-K_X(D)$ is ample, such an $X$ is covered by a family of log rational or toroidal curves. Based on our previous analysis, we now propose the following analog of
 McKinnon's conjecture for integral points.
 We shall restrict ourselves to schemes that indeed admit an abundance of integral points:
 integral points on log Fano varieties might still be thin --- but note that
 a conjecture of Santens~\cite{Santens} expects this to be fully explained by concrete obstructions.
 For the sake of simplicity, we shall directly assume that this is not the case, that is, that the \emph{integral Hilbert property} holds:
 recall that a subset $W\subset X(k)$ is \emph{thin} if there exists a generically finite rational map $f\colon Y\to X$ from a $k$-variety $Y$ that does not admit a section such that $W\subset f(Y(k))$, and that $\mU = \mX\setminus \mD$ satisfies the integral Hilbert property if its set $\mU(\fo)$ of integral points is not thin. This latter condition is stronger than the set of integral points being Zariski dense. Moreover, the equidistribution properties discussed in previous sections imply the integral Hilbert property.

    \begin{conjecture}\label{conj:integralmckinnon}
       Let $(\mX,\mD)$ be a weakly log Fano scheme satisfying the integral Hilbert property, and let $x\in D(k)$ lie on a minimal stratum of $D = \mD_k$. Let $v$ be an archimedean place and $L$ be an ample line bundle. Then there exists a rational curve $\mC$ on $(\mX,\mD)$ which is either log rational or toroidal such that $$\alpha_v(x, \mU;L)=\alpha_v(x, \mC_0;L).$$ 
    \end{conjecture}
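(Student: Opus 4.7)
The easy direction of Conjecture~\ref{conj:integralmckinnon} is that for any rational curve $\mC$ on $(\mX, \mD)$ with $\mC_0(\fo)\subseteq\mU(\fo)$, the inequality $\alpha_v(x,\mC_0;L)\le\alpha_v(x,\mU;L)$ holds: any exponent $\alpha$ making $d_v(x,y)^\alpha H_L(y)$ bounded on $\mU(\fo)$ remains bounded on the smaller set $\mC_0(\fo)$, so the set of admissible $\alpha$'s can only grow when one restricts the test set. The substance of the conjecture is therefore to exhibit a rational curve, of one of the two allowed types, on which the approximation constant is \emph{at least} $\alpha^*:=\alpha_v(x,\mU;L)$.

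My plan is to extract such a curve from a best-approximating sequence. By definition of $\alpha^*$, there is a sequence $(y_n)_{n\in\NN}\subseteq\mU(\fo)$ with $y_n\to x$ in the $v$-adic topology on $X(k_v)$ and with $d_v(x,y_n)^{\alpha^*-\varepsilon}H_L(y_n)$ unbounded for every $\varepsilon>0$. After passing to a subsequence, I would fix a ``direction of approach'' at $x$ along which the $y_n$ cluster. The weakly log Fano hypothesis enters here crucially: by the Keel--McKernan result invoked in the excerpt~\cite[Cor.~5.4]{KeelMcKernan}, $X$ is covered by log rational and toroidal curves of bounded $(-K_X-D)$-degree, and many such curves pass through $x$. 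A bend-and-break argument applied to families of these curves, pinned down by consecutive $y_n$, should produce a limit rational curve $C$ through $x$ containing a cofinal subsequence of $(y_n)$. Siegel's theorem (Theorem~\ref{thm:Siegel}), together with the infinitude of $\mC_0(\fo)$, forces $(C,C\cap D)$ to be log rational or toroidal---the only possibilities with infinitely many integral points---and the parametrizations of Propositions~\ref{prop:a1para} and~\ref{prop:torus-para-integral} apply. The approximation constant on such a $\mC$ is then computed by Propositions~\ref{prop:approximation-log-rational} or~\ref{prop:approximation-toroidal}, and one verifies that the sequence $(y_n)$ must realize it, so equality $\alpha_v(x,\mC_0;L)=\alpha^*$ is forced.

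The principal obstacle is the middle step: extracting a single bounded-degree rational curve containing an infinite subsequence of the $y_n$. A priori nothing forces the integral points to concentrate along an algebraic family, and they could approach $x$ through directions lacking algebraic structure. A conditional approach would go through a suitable log version of Vojta's conjecture, which bounds from above the approximation rate achievable by integral points lying off a prescribed family of rational curves, thereby forcing the best approximations onto such curves; the integral Hilbert property hypothesis would serve to guarantee enough points to work with. Since the log Vojta conjecture, and indeed McKinnon's original conjecture, both remain wide open, I would not expect an unconditional proof of Conjecture~\ref{conj:integralmckinnon} in full generality. The sensible strategy---which the authors follow in Sections~\ref{se:ex} and~\ref{se:toric}---is to exploit additional geometric structure available in specific classes (an explicit cubic surface in one case, a torus action in the other) to bypass bend-and-break and verify the conjecture by direct construction of the desired curves together with a separate upper bound on $\alpha_v(x,\mU;L)$ coming from the explicit parametrization of integral points.
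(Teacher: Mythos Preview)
This statement is a \emph{conjecture}; the paper offers no proof of it, only verifications in the special cases of Sections~\ref{se:ex} and~\ref{se:toric}. Your concluding paragraph correctly recognises this, so there is nothing to compare in that respect.

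There is, however, a genuine error in your opening paragraph: you have the monotonicity of the approximation constant reversed. For $W'\subseteq W$ one has $\alpha_v(x,W';L)\ge \alpha_v(x,W;L)$, not $\le$. This is how the paper uses the constant throughout: the proofs of Propositions~\ref{prop:approximation-log-rational} and~\ref{prop:approximation-toroidal} both begin with the observation $\alpha_v(x,\mC_0;L)\ge \alpha_v(x,C;L)$, where $\mC_0(\fo)\subseteq C(k)$; in Section~\ref{se:ex} the curves $L_i$ furnish the \emph{upper} bound $\alpha_v(x,\mU;L)\le 1$ while the Liouville inequality of Lemma~\ref{lem:liouville-example} supplies the lower bound. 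Intuitively, a smaller supply of points can only approximate $x$ \emph{worse}, hence yields a larger constant. Your argument (``bounded on $\mU(\fo)$ implies bounded on $\mC_0(\fo)$'') rests on reading Definition~\ref{def:alpha} as a uniform bound over all $y\in W$; but with that literal reading the constant would be $+\infty$ whenever $W$ contains infinitely many points of unbounded height bounded away from $x$, which is the generic situation. The operative definition is via approximating sequences, and restricting to a subset shrinks the supply of such sequences.

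Consequently the trivial direction is $\alpha_v(x,\mU;L)\le \alpha_v(x,\mC_0;L)$ for every curve $\mC$, and the content of the conjecture is to find a curve with $\alpha_v(x,\mC_0;L)\le \alpha_v(x,\mU;L)$. Once you swap the inequalities, your heuristic of threading a bounded-degree rational curve through a best-approximating sequence is pointed in the right direction, and your diagnosis that this step is the real obstacle---plausibly requiring input of Vojta type---is reasonable and in line with why the statement remains conjectural. (A minor slip: the example in Section~\ref{se:ex} is a degree-$6$ del Pezzo surface, not a cubic surface.)
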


	Nevertheless, it remains interesting to study approximation constants on schemes failing
	these assumptions. In fact, it seems likely that rational curves still govern approximation 
	constants on a larger class of varieties --- similarly to how Manin's conjecture
	holds for many \emph{almost Fano} varieties as defined by Peyre~\cite[Hyp.~3.27]{Peyre} such as
	all toric varieties, even though there also exist counterexamples in this more general setting~\cite[\S\,5.1]{MR4472281}.

    \begin{remark}
        \begin{enumerate}
        \item Curves achieving $\alpha_v(x, \mU;L)$ need not be unique.
        \item By Proposition~\ref{prop:approximation-toroidal}, nodal toroidal curves are more likely to contribute to a smaller approximation constant because of the factor $r_{y,v}$.
        \item Proposition~\ref{prop:a1para} implies in particular that the number of integral points of $L$-height at most $B$ on a log rational curve grows like a power of $B$ whenever that set contains a generically regular point. On the other hand, integral points on a toroidal curve lie in finitely many orbits of a torus, whence  the number of integral points of bounded height $B$ can only grow like a power of $\log B$. 
		It would be interesting to find out how `likely' it is that such a sparse set of points determines the approximation constant, and whether this has any implications for the geometry of the pair $(X,D)$.
        \end{enumerate}
    \end{remark}

   Finally, we also ask the following question, inspired by the notion of the \emph{essential} approximation constant, introduced and studied in depth by the first author~\cite{HuangIJNT,HuangAA,HuangBSMF,HuangANT}.
   \begin{question}\label{q:essential}
       In the setting of Conjecture \ref{conj:integralmckinnon}, does there exist a Zariski open subset $V$ and a (deformation) family $\{C_t\}_{t\in B}$ parametrized by $B$ of log rational or toroidal curves  sweeping out a Zariski dense subset of $X$ such that
	 	\begin{equation*}
			\alpha_v(x, \mU\cap C_t;L)=\alpha_v(x, \mU\cap V;L)
		\end{equation*}  
	   	for all $t\in B$?
   \end{question}

	\section{Example: a del Pezzo surface of degree 6}\label{se:ex}	
    As a first example, we shall investigate a del Pezzo surface over $k=\QQ$.    
	Let $X$ be the surface obtained by blowing up $\PP^2$ in three general points. This example is studied in detail in \cite{HuangIJNT}. It can equivalently be viewed as blowing up in $\PP^1\times\PP^1$ in two torus-invariant points with a specific configuration, say
	\begin{equation*}
		z_1=([0:1],\,[1:0])\quad\text{and}\quad z_2=([1:0],\,[0:1]).
	\end{equation*}
	We shall stick with this model and let $\mX$ be the toric $\ZZ$-scheme defined by the same fan as $X$. To fix further notation, Let $E_1$ and $E_2$ be the exceptional divisors above $z_1$ and $z_2$ in $X$; write $(s_0,s_1)$ and $(t_0,t_1)$ for the coordinate functions on the first and second factor of $\PP^1\times \PP^1$, respectively.
	
    Concretely, we shall investigate integral points on the complement of the boundary divisor $D$ that is the strict transform of $(s_0t_0=s_1t_1)\subset\PP^1\times\PP^1$, passing through both $z_1$ and $z_2$. As usual, write $\mD = \overline{D}$ for the Zariski closure of $D$ in $\mX$. As the class of the anticanonical line bundle is $-K_{X}=\cO(2,2)-E_1-E_2$ and the class of $D$ is $\cO(1,1)-E_1-E_2$, the log anticanonical class is $-K_X(D) = \cO(1,1)$, which can be checked to be big and nef, but not ample. Nevertheless, this surface proves to be a rich source of examples for the constructions in the previous sections. 
    
    Given a point $y=([s_0:s_1],\,[t_0:t_1])$ with coprime pairs of coordinates away from the center of the blow up, it is integral on $\mU$ if and only if 
	\begin{equation}\label{eq:intX3}
		s_0t_0-s_1t_1 \in \{\pm \gcd(s_0,t_1)\gcd(s_1,t_0)\}.
	\end{equation}
	Indeed, we may study the Cox ring
	\begin{equation*}
		R = \QQ[s_0',s_1',t_0',t_1',e_1,e_2]
	\end{equation*}
	of the toric variety $X$; the first four coordinates correspond to the strict transforms of the torus invariant lines under the orbit-cone correspondence, while the last two coordinates correspond to the two exceptional curves $E_1$ and $E_2$. Starting with $y$ as above, these new coordinates are obtained as
	\begin{equation*}
		e_1 = \gcd(s_0,t_1),\quad \text{and}\quad e_2 = \gcd(s_1,t_0),
	\end{equation*}
	making the strict transforms
	\begin{equation*}
		s_0' = e_1^{-1}s_0,\ s_1' = e_2^{-1}s_1,\ t_0' = e_2^{-1}t_0,\ \text{and}\ t_1' = e_1^{-1}t_1.
	\end{equation*}
	The integrality condition \begin{equation}\label{eq:intX3prime}
	    s_0't_0' - s_1't_1'\in \ZZ^\times, 
	\end{equation}
	when expressed in the Cox ring of $X$, translates to~\eqref{eq:intX3}.
\begin{lemma}
    The scheme $\mU$ satisfies the integral Hilbert property.
\end{lemma}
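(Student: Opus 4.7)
\medskip
\noindent\textbf{Proof plan.}
The plan is to construct an explicit $\ZZ$-morphism $\Phi\colon\dA^2_\ZZ\to\mU$ that sends $\ZZ^2$ into $\mU(\ZZ)$ and is birational onto its image in $X$, and then propagate non-thinness from $\ZZ^2\subset \QQ^2$ (via classical Hilbert's irreducibility theorem) to $\mU(\ZZ)\subset X(\QQ)$ through pullback of thin sets.

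Concretely, set
\[
  \Phi\colon \dA^2_\ZZ\longrightarrow \PP^1_\ZZ\times_\ZZ\PP^1_\ZZ,\qquad (u,v)\longmapsto \bigl([1:u],\,[uv+1:v]\bigr).
\]
Since $\gcd(1,u)=\gcd(uv+1,v)=1$, both coordinate pairs are coprime over $\ZZ$, so this is a well-defined $\ZZ$-morphism. Its image avoids both blow-up centers: $[1:u]$ stays in the affine chart $\{s_0\neq 0\}$ so $z_1=([0:1],[1:0])$ is missed, and $[1:u]=[1:0]$ forces $u=0$, in which case the second factor equals $[1:v]\ne[0:1]$, missing $z_2$. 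Thus $\Phi$ lifts uniquely to a morphism $\dA^2_\ZZ\to\mX$. Along the image, $s_0t_0-s_1t_1=(uv+1)-uv=1$, so the lift factors through $\mU$, and the integrality criterion \eqref{eq:intX3} holds for every $(u,v)\in\ZZ^2$; hence $\Phi(\ZZ^2)\subseteq\mU(\ZZ)$. Finally, $\Phi$ is injective on geometric points (recover $u$ from $[1:u]$, then $v$ from $[uv+1:v]$), so it is birational onto its image in $X$ since source and target both have dimension $2$.

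Let $W\subset X(\QQ)$ be a thin set, with $W\subseteq f(Y(\QQ))$ for some generically finite $\QQ$-morphism $f\colon Y\to X$ without rational section. The base change $\dA^2_\QQ\times_X Y\to\dA^2_\QQ$ is again generically finite; any rational section of it, composed with the birational inverse of $\Phi$, would produce a rational section of $f$, contradicting the hypothesis. Hence $\Phi^{-1}(W)\subseteq\QQ^2$ is itself thin. By the classical Hilbert irreducibility theorem, the set of integer points $\ZZ^2\subset\QQ^2=\dA^2(\QQ)$ is not thin, so there exists $(u,v)\in\ZZ^2$ with $\Phi(u,v)\notin W$; the point $\Phi(u,v)\in\mU(\ZZ)$ then witnesses $\mU(\ZZ)\not\subseteq W$, proving the integral Hilbert property.

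The main work is locating the parametrization $\Phi$: its crucial feature is the polynomial identity $1\cdot(uv+1)-u\cdot v=1$, which makes integrality on $\mU$ automatic for every integer specialization and also guarantees disjointness from the boundary $D$. Once this is in place, the remaining ingredients --- preservation of thinness under preimage by a dominant birational morphism and the Hilbertian nature of $\dA^2$ over $\QQ$ --- are standard.
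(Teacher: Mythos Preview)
Your proof is correct, but it takes a genuinely different route from the paper's. The paper does not argue via a birational parametrization; instead it passes to the smaller open $\mV=\mU\setminus\overline{E_1\cup E_2}\subset \PP^1_\ZZ\times\PP^1_\ZZ$ and proves that $\mV(\ZZ)$ is dense in $\prod_p\mV(\ZZ_p)$ by explicitly hitting every congruence class modulo $q$ with an integral solution of $s_0t_0-s_1t_1=\pm 1$. Strong approximation then yields the integral Hilbert property (this last implication is left implicit in the paper, and rests on the standard fact that a thin set misses a residue class modulo infinitely many primes).

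Your argument is more direct and more self-contained for the stated goal: once the identity $1\cdot(uv+1)-u\cdot v=1$ is spotted, the map $\Phi$ is a birational $\ZZ$-morphism $\dA^2_\ZZ\to\mU$, and the Hilbert property transfers from $\ZZ^2\subset\dA^2(\QQ)$ by pullback of thin sets along a birational morphism, exactly as you wrote. The paper's approach buys a strictly stronger conclusion (strong approximation for $\mV$), at the cost of leaving the deduction of non-thinness to the reader; your approach proves only what is claimed, but the logical chain is explicit and avoids any appeal to the strong-approximation-implies-Hilbert step. Both ultimately exploit the same underlying parametrization of solutions to $s_0t_0-s_1t_1=\pm 1$ --- your $\Phi$ is essentially one branch of the paper's construction with $(s_0',s_1')=(1,u)$, $(r_0,r_1)=(1,0)$, and $k$ specialized to $-v$.
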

\begin{proof}
Let us consider the smaller open subscheme 
	\begin{equation*}
		\mV = \mU\setminus \overline{E_1 \cup E_2} = (\PP^1_\ZZ \times \PP^1_\ZZ) \setminus V(s_0t_0-s_1t_1).
	\end{equation*}
	It is enough to prove that $\mV(\ZZ)$ is dense in $\prod_{p<\infty}\mV(\ZZ_p)$, which implies that $\mV(\ZZ)$, and hence $\mU(\ZZ)$, is not thin. This follows from the effective Hilbert irreducibility theorem, as explained by Serre in \cite[\S3.6~Proof of Prop. 3.5.2]{SerreGalois}.
    
	Note that integral points on $\mV$ are those
	$([s_0:s_1],\ [t_0:t_1])$ 
	that satisfy
	\begin{equation}\label{eq:variant-integrality-cond}
		s_0t_0-s_1t_1 \in \ZZ^\times.
	\end{equation}
    Let $q > 1$ and fix a quadruple congruence residue $(a_0,a_1,b_0,b_1)\bmod q$ 
	satisfying \begin{equation*}
    a_0b_0-a_1b_1\equiv \pm 1\bmod q.
\end{equation*} Since such congruence residues form a topological basis of $\prod_{p<\infty}\mV(\ZZ_p)$, it is enough to show that $(a_0,a_1,b_0,b_1)$ can be lifted to an integral point of $\mV$. We can choose a coprime pair $(s_0',s_1')\in\ZZ^2$ such that $s_0'\equiv a_0\bmod q,s_1'\equiv a_1\bmod q$. Given such a pair $(s_0',s_1')$, we can select a pair $(r_0,r_1)\in\ZZ^2$ such that $$s_0'r_0-s_1'r_1=\pm 1.$$
	Now let $k\in\ZZ$ be such that
	\begin{equation*}
		k\equiv \pm(b_0r_1-b_1r_0)\bmod q;
	\end{equation*} 
    Note that the $\pm$ signs above are all the same. 
	Let the pair $(t_0',t_1')$ be defined by $t_0'=r_0-ks_1',t_1'=r_1-ks_0'$. One easily checks that the quadruple $(s_0',s_1',t_0',t_1')\in\ZZ^4$ satisfies the congruence condition 
	\begin{equation*}
		(s_0',s_1',t_0',t_1')\equiv (a_0,a_1,b_0,b_1)\bmod q
	\end{equation*}
	and the integrality condition \eqref{eq:variant-integrality-cond}. Hence $([s_0':s_1'],[t_0':t_1'])\in\mV(\ZZ)$. This finishes the proof.
\end{proof}

We now turn to studying integral approximation constants of the point
	\begin{equation*}
		x=([1:1],\,[1:1])\in D(\QQ).
	\end{equation*}
	We begin by describing the relevant height and distance functions explicitly. Any point $y$ on $X$ sufficiently close to $x$ cannot lie on the exceptional divisors, hence can be described by homogeneous coordinates
	\begin{equation*}
		y = ([s_0:s_1],\,[t_0:t_1])
	\end{equation*}
	on $\PP^1\times \PP^1$, both coordinates being represented by coprime integers. If such a point is sufficiently close to $x$, its distance can be computed through an affine chart:
	\begin{equation*}
		d(x,y) = d_\infty(x,y)\asymp \max\left(\left|\frac{s_1}{s_0}-1\right|,\left|\frac{t_1}{t_0}-1\right|\right).
	\end{equation*}
    Note that $-K_X(D)$ is not ample as the restrictions of $-K_X(D)$ to the two exceptional divisors are trivial. But defining a log anticanonical height function 
	\begin{equation*}
		H(y)=H_{-K_X(D)}(y)=\max\{|s_0t_0|,|s_0t_1|,|s_1t_0|,|s_1t_1|\}
	\end{equation*}
	through the standard choice of generators of $\cO(1,1)$
	(note that the $p$-adic variant of this maximum is always $1$ by the coprimality of the coordinates) and taking a point $x$ away from them, the Northcott property holds ``near $x$'', so that the log-anticanonical height is a good measure  of complexity in the definition of the approximation constant.

	In order to compute the approximation constant $\alpha_\infty(x,W;-K_X(D))$ for an appropriate $W$, we shall prove Liouville-type inequalities which allow to bound it from below and construct rational curves that provide matching bounds from above.
\begin{lemma}\label{lem:liouville-example}
	For all $y = ([s_0:s_1],\,[t_0:t_1])$ close to $x$ with $y\ne x$, we have 
	\begin{equation*}
		d(x,y)H(y)\gg 1.
	\end{equation*}
	Assume moreover that $s_0\neq s_1$ and $t_0\neq t_1$. Then 
	\begin{equation*}
		d(x,y)^2H(y)\gg 1.
	\end{equation*}
\end{lemma}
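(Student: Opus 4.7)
The plan is to translate both sides into the integer coordinates of $y$ and then invoke the integrality condition~\eqref{eq:intX3}, which forces $|s_0 t_0 - s_1 t_1| \ge 1$.

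Since $y$ is close to $x=([1:1],[1:1])$, we may choose representatives with all four coordinates positive satisfying $|s_0|\asymp|s_1|$ and $|t_0|\asymp|t_1|$. Consequently
\begin{equation*}
    H(y)\asymp|s_0 t_0|, \qquad d(x,y)\asymp \max\left(\frac{|s_0-s_1|}{|s_0|},\,\frac{|t_0-t_1|}{|t_0|}\right),
\end{equation*}
so that $d(x,y)\,H(y) \asymp \max\bigl(|s_0-s_1|\,|t_0|,\ |s_0|\,|t_0-t_1|\bigr)$.

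For the first inequality, I would exploit the bilinear identity
\begin{equation*}
    s_0 t_0 - s_1 t_1 \;=\; (s_0-s_1)\,t_1 \;+\; s_0\,(t_0-t_1),
\end{equation*}
which, via the triangle inequality and $|t_1|\asymp|t_0|$, yields $|s_0 t_0 - s_1 t_1| \ll d(x,y)\,H(y)$. Since $y\ne x$ with $y\in \mU$ forces $s_0 t_0 \ne s_1 t_1$ (away from the exceptional divisors the locus $s_0 t_0 = s_1 t_1$ coincides with $D$), the left-hand side is a positive integer by~\eqref{eq:intX3}, hence at least $1$, giving $d(x,y)\,H(y) \gg 1$.

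For the second inequality, the additional hypotheses $s_0\neq s_1$ and $t_0\neq t_1$ guarantee $|s_0-s_1|\ge 1$ and $|t_0-t_1|\ge 1$ as nonzero integers. Applying $\max(a,b)^2\ge ab$ for $a,b\ge 0$,
\begin{equation*}
    d(x,y)^2 \;\ge\; \frac{|s_0-s_1|}{|s_0|}\cdot\frac{|t_0-t_1|}{|t_0|} \;\ge\; \frac{1}{|s_0 t_0|},
\end{equation*}
and multiplying by $H(y)\asymp|s_0 t_0|$ concludes. No step is particularly difficult; the decisive observation is the bilinear identity, which converts the integrality lower bound directly into one on $d(x,y)\,H(y)$. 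Notably, the second inequality uses only that the coordinates are integers rather than the sharper~\eqref{eq:intX3}, reflecting that once the two fibers $s_0=s_1$ and $t_0=t_1$ through $x$ are excluded, a Liouville-type bound of exponent $2$ persists for every rational point near $x$, integral or not.
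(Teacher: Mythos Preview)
Your argument for the second inequality is exactly the paper's: bound $d(x,y)^2$ below by the product of the two factors and use $|s_0-s_1|,|t_0-t_1|\ge 1$.

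For the first inequality your route diverges from the paper's and leaves a small gap. You invoke $|s_0t_0-s_1t_1|\ge 1$ via the bilinear identity; but this requires $s_0t_0\ne s_1t_1$, i.e.\ $y\notin D$, which is \emph{not} part of the hypothesis (the lemma is stated for all rational $y$ close to $x$, and $D\setminus\{x\}$ contains such points). Your appeal to~\eqref{eq:intX3} imports the even stronger assumption $y\in\mU(\ZZ)$, and in any case is unnecessary: once $s_0t_0\ne s_1t_1$, the difference is already a nonzero integer. The paper's argument is both simpler and covers the missing case: since $y\ne x$, at least one of $t_0\ne t_1$ or $s_0\ne s_1$ holds; if say $t_0\ne t_1$, then
\[
  d(x,y)H(y)\ \gg\ \left|\frac{t_1-t_0}{t_0}\right|\cdot|t_0|\max\{|s_0|,|s_1|\}\ \ge\ |t_1-t_0|\ \ge\ 1,
\]
and symmetrically otherwise. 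In particular the first inequality, like the second, is a pure Liouville bound for rational points and needs nothing about~$\mU$, contrary to the framing in your proposal. For the intended applications (approximating $x$ by points of $\mU(\ZZ)$) your version suffices, but it does not prove the lemma as stated.
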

\begin{proof}
	For the first assertion, note that 
	\begin{equation*}
		d(x,y)H(y) \gg \left|\frac{t_1-t_0}{t_0}\right| |t_0|\max\{|s_0|,|s_1|\} \ge 1
	\end{equation*}
	if $t_1\ne t_0$, and the bound can be obtained symmetrically if $t_1=t_0$ (hence $s_1\ne s_0$).
	For the second bound, observe that
	\begin{equation*}
		d(x,y)^2H(y)\gg \left|\frac{(s_1-s_0)(t_1-t_0)}{s_0t_0}\right| |s_0t_0| \ge 1
	\end{equation*}
	under the stated condition.
\end{proof}

\begin{figure}
    \begin{center}
        \includegraphics[width=.75\linewidth]{figures/p1_p1_alpha.pdf}
        \caption{The heights $H(y)$ and ratios $-\log H(y) / \log d(x,y)$ of points of height at most $1000$. For points on a curve $\mC$ meeting $x$, the latter ratio converges to $\alpha(x,\mC_0;-K_X(D))$.}\label{fig:height-ratios}
    \end{center}
\end{figure}
We now determine all the possible integral curves that achieve the best approximation to $P_0$. Let $L_1$ and $L_2$ be the strict transforms of the lines $s_0=s_1$ and $t_0=t_1$ in $\PP^1\times\PP^1$, the classes of which belong to $\cO(1,0)$ and $\cO(0,1)$, respectively. They all have log anticanonical degree $1$, and intersection numbers with $D$ are also $1$ (at $x$). 
\begin{theorem}
    We have \begin{equation*}
	\alpha(x,\mU;-K_X(D))=\alpha(x,L_i\cap \mU;-K_X(D))=1
\end{equation*}
for $i\in\{1,2\}$. In particular, Conjecture~\ref{conj:integralmckinnon} holds for $X$.
\end{theorem}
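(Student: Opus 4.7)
The plan is to sandwich both approximation constants between the same upper and lower bounds: a Liouville-type estimate from Lemma~\ref{lem:liouville-example} for the lower bound, and an explicit integral sequence on $L_i$ for the upper bound. Since the approximating sequence is contained in $L_i\cap \mU\subseteq \mU$, it controls $\alpha$ on both sets at once, and the two equalities drop out simultaneously.

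For the upper bound, I would first verify that $L_1$ and $L_2$ do not pass through the blow-up centers $z_1,z_2$ (immediate from the coordinates), so their strict transforms $\mL_i$ remain isomorphic to $\PP^1_\ZZ$, have classes $\cO(1,0)$ and $\cO(0,1)$, meet $\mD$ only at $x$ with multiplicity $1$, and admit generically regular integral points. These data make $(\mL_i,\mL_i\cap \mD)$ log rational curves of $-K_X(D)$-degree $\cO(1,1)\cdot\cO(1,0)=1$. For concreteness, on $L_1$ the integrality condition~\eqref{eq:intX3} for $y=([1:1],\,[t_0:t_1])$ reduces to $t_0-t_1=\pm 1$, and the sequence $y_n=([1:1],\,[n:n+1])$ satisfies this. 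Using the formulas for distance and height recalled in Section~\ref{se:ex}, one gets $d(x,y_n)\asymp 1/(n+1)$ and $H(y_n)=n+1$, so $d(x,y_n)^\alpha H(y_n)\asymp (n+1)^{1-\alpha}$ remains bounded precisely for $\alpha\geq 1$. This gives $\alpha(x,L_i\cap \mU;-K_X(D))\leq 1$, and since $L_i\cap \mU\subseteq \mU$, also $\alpha(x,\mU;-K_X(D))\leq 1$.

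For the matching lower bound, Lemma~\ref{lem:liouville-example} guarantees $d(x,y)H(y)\gg 1$ for $y$ close to $x$ in $\mU(\ZZ)\setminus\{x\}$. Hence for any $\alpha<1$ and any sequence $y_n\to x$ in $\mU(\ZZ)\setminus\{x\}$,
\begin{equation*}
    d(x,y_n)^\alpha H(y_n)=d(x,y_n)^{\alpha-1}\bigl(d(x,y_n)H(y_n)\bigr)\gg d(x,y_n)^{\alpha-1}\longrightarrow\infty,
\end{equation*}
so no sequence makes $d^\alpha H$ bounded; this forces $\alpha(x,\mU;-K_X(D))\geq 1$ and, a fortiori, $\alpha(x,L_i\cap \mU;-K_X(D))\geq 1$. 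Combining with the previous step yields the claimed equalities, and since $\mL_i$ is a log rational curve achieving $\alpha(x,\mU;-K_X(D))$, Conjecture~\ref{conj:integralmckinnon} is verified for the triple $(X,D,x)$. I do not anticipate a serious obstacle; the only delicate steps are the geometric assertions about $L_i$ (that it avoids the blow-up centers and meets $\mD$ transversally only at $x$), both of which follow at once from the explicit coordinates of $x,z_1,z_2$ and the defining equation of $D'$.
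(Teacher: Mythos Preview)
Your proposal is correct and follows essentially the same approach as the paper: the lower bound comes from the first part of Lemma~\ref{lem:liouville-example}, and the upper bound comes from the log rational curves $L_i$ of $-K_X(D)$-degree $1$ meeting $D$ transversally at $x$. The paper simply invokes Proposition~\ref{prop:approximation-log-rational} (after noting the integral points $([1:1],[1:0])$ and $([1:0],[1:1])$) rather than writing out an explicit approximating sequence, but your direct construction is exactly what underlies that proposition.
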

\begin{proof}
    As $L_1$ and $L_2$ are smooth and contain the integral points $([1:1],\,[1:0])$ and $([1:0],\,[1:1])$, respectively, it follows from Proposition~\ref{prop:approximation-log-rational} that their integral approximation constants both equal one, and match the lower bound provided by the first part of Lemma~\ref{lem:liouville-example}.
\end{proof}

In order to tackle Question~\ref{q:essential}, we shall continue to construct rational curves achieving the `next best' approximation constants. Comparing heights and distances of points of height at most 1000 suggests that the resulting essential approximation constant is indeed~$2$ (Figure~\ref{fig:height-ratios}).
It turns out that three different types of curve achieve this: smooth log rational curves of degree two, nodal toroidal curves of degree four, and cuspidal log rational curves of the same degree.
We start with the first of these: smooth curves of log anticanonical degree two passing through $x$ such that they intersect $D$ tangently at $x$ --- this latter condition makes them log rational. The set of log rational curves of class $\CO(1,1)$ passing through $x$ tangently to $D$ is the pencil of curves of the form 
\[
	C_{a,b}\colon a (s_0t_0 -s_1t_1) +b(s_0-s_1)(t_0-t_1) = 0
\]
for coprime $(a,b) \in \ZZ^2$. Indeed, $s_0(t_0-t_1)$, $s_1(t_0-t_1)$, and $t_0(s_0-s_1)$ span the two-dimensional set of all divisors containing $x$; being tangent to $D$ in $x$ (i.e., being log rational) can be checked by computing partial derivatives and reduces the dimension by $1$; finally, the case $a=0$ has to be excluded as it corresponds to a reducible divisor (the union of $L_1$ and $L_2$), while with $b=0$, the curve coincides with the union of $D$ and the exceptional curves.
\begin{proposition}\label{prop:cab}
    If $a\ne 0$ and $b\ne 0$, then
	\begin{equation*}
		\alpha(x,C_{a,b}\cap \mU;-K_X(D))=2.
	\end{equation*}
\end{proposition}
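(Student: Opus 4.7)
The plan is to apply Proposition~\ref{prop:approximation-log-rational} to $C_{a,b}$; this requires three ingredients: (i) verifying that the log pair $(C_{a,b},\,D \cap C_{a,b})$ is log rational, (ii) computing $\deg_{-K_X(D)}(C_{a,b})$ and $\mult_x(C_{a,b})$, and (iii) exhibiting a generically regular integral point on $C_{a,b} \cap \mathcal{U}$.

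For (i) and (ii), I would first observe that $C_{a,b}$ is a $(1,1)$-curve on $\PP^1\times\PP^1$. The determinant of the associated bilinear form (in affine coordinates $u=s_0/s_1$, $v=t_0/t_1$, the equation reads $(a+b)uv - bu - bv + (b-a) = 0$) equals $-a^2$, so $C_{a,b}$ is irreducible and smooth precisely because $a \neq 0$. A direct check at $z_1$ and $z_2$ shows that neither point lies on $C_{a,b}$ when $b\ne 0$, so the strict transform in $X$ coincides with the pullback and has class $\CO(1,1)$; hence $\deg_{-K_X(D)}(C_{a,b}) = \CO(1,1)\cdot\CO(1,1) = 2$ and $\mult_x(C_{a,b}) = 1$. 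The remaining point to check is that $C_{a,b}$ meets $D$ only at $x$: substituting $s_0t_0=s_1t_1$ (i.e.\ $uv=1$) into the defining equation and solving yields $u+v=2$, which together with $uv=1$ forces $u=v=1$, so the reduced intersection is $\{x\}$. The two curves share the tangent direction $u+v=2$ at $x$, and Bézout forces the local intersection number to be exactly $2$. Consequently $\phi^{-1}(D\cap C_{a,b})$ is a single rational point and $(C_{a,b},D\cap C_{a,b})$ is log rational.

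For (iii), I would use the explicit parametrization
\[
	\PP^1\to C_{a,b},\qquad [t_0:t_1]\mapsto\bigl([b t_0 - (b-a)t_1 : (a+b)t_0 - b t_1],\,[t_0:t_1]\bigr),
\]
which is obtained by solving the $(1,1)$-equation for $[s_0:s_1]$ in terms of $[t_0:t_1]$, and evaluate at $[t_0:t_1]=[0:1]$. Assuming $\gcd(a,b)=1$ (which we may, since $C_{a,b}$ depends only on $[a:b]$), this yields the point $y=([b-a:b],\,[0:1])$. Computing $s_0 t_0 - s_1 t_1 = -b$, $\gcd(s_0,t_1) = \gcd(b-a,1)=1$, and $\gcd(s_1,t_0)=\gcd(b,0)=|b|$, the integrality criterion~\eqref{eq:intX3} is satisfied, and since $C_{a,b}$ is smooth the point $y$ is generically regular.

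With all three ingredients in place, Proposition~\ref{prop:approximation-log-rational} yields
\[
	\alpha(x, C_{a,b}\cap\mathcal{U};-K_X(D)) = \frac{\deg_{-K_X(D)}(C_{a,b})}{\mult_x(C_{a,b})} = \frac{2}{1} = 2.
\]
The step I expect to require the most care is (iii): for a different pencil, one would have to check that a suitable rational point on the generic member spreads out to an integral one, which can fail for specific $(a,b)$. Here the miracle is that $[0:1]$ (or equivalently $[1:0]$, provided $a+b\ne 0$) gives an integral point uniformly in the parameters, so we do not have to case-split on arithmetic conditions on $a,b$. The lower bound $\alpha \ge 2$ could alternatively be recovered directly from the second inequality of Lemma~\ref{lem:liouville-example}, giving independent confirmation.
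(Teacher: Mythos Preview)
Your argument is correct and follows the same global strategy as the paper (verify that $C_{a,b}$ is log rational with the right degree and multiplicity, produce an integral point, then invoke Proposition~\ref{prop:approximation-log-rational}), but the execution of step~(iii) is genuinely different. The paper does not write down a global integral point; instead it checks that $C_{a,b}\cap\mU$ has $\ZZ_p$-points for every prime~$p$ (distinguishing $p\mid b$ and $p\nmid b$) and then appeals to the remark preceding Proposition~\ref{prop:a1para} that a log rational curve with local integral points everywhere has a global one by strong approximation for~$\dA^1$. Your explicit point $y=([b-a:b],[0:1])$ sidesteps this entirely and is more elementary, at the cost of being specific to this pencil; the paper's local argument, by contrast, generalizes more readily to situations where a closed-form integral point is not obvious. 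Your verifications of smoothness, of $C_{a,b}\cap D=\{x\}$, and of the intersection multiplicity are also more explicit than the paper's, which leaves most of these checks to the reader after describing the pencil.
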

\begin{proof} 
All of such curves $C_{a,b}$ have infinitely many integral points as they are everywhere locally soluble: for primes $p$ dividing $b$, the equation simplifies to $e_1e_2(s_0't_0'-s_1't_1')\equiv 0$ modulo $p$ in the Cox ring, having nonsingular solutions with $e_1=0$, for instance. For primes $p$ not dividing $b$, let $s_0=0$ and $s_1=t_1=1$ (such a point is automatically integral as $s_0t_0-s_1t_1\in \FF_p^\times$); the equation becomes $-a-b(t_0-1)=0$ and has a solution modulo $p$ that can be lifted by Hensel's Lemma.
Using Proposition~\ref{prop:approximation-log-rational}, we deduce that $\alpha(x,C_{a,b}\cap \mU;-K_X(D))=2$.
\end{proof}

In \cite[\S5.2.2]{HuangAA}, a family of nodal curves at $x$ is constructed:
\begin{equation}\label{eq:nodal}
   S_{a,b}\colon as_0s_1(t_0-t_1)^2=b t_0t_1(s_0-s_1)^2,
\end{equation}
where the parameters $(a,b)$ are coprime non-zero integers.
In addition to $x$, these curves meet the curve $s_0t_0=s_1t_1$ in $\PP^1\times \PP^1$ only in $z_1$ and $z_2$; as they are not tangent to it in these points, they only meet $D$ in $x$. Thus, the curves $S_{a,b}$ are nodal toroidal. 
\begin{proposition}\label{prop:sab}
    If $ab>0$ and $ab$ is not a square, then
	\begin{equation*}
	\alpha(x,S_{a,b}\cap \mU;-K_X(D))=2.
	\end{equation*}
\end{proposition}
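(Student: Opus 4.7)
The plan is to apply Proposition~\ref{prop:approximation-toroidal}(ii) to the nodal toroidal curve $S_{a,b}$: I will identify its splitting field, verify that $\#\mC_0(\ZZ) = \infty$ via Corollary~\ref{cor:torus-infinite-numberfield}, and compute the $L$-degree and branch multiplicities appearing in the formula.

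First I would determine the splitting field by expanding~\eqref{eq:nodal} in local coordinates $u = s_1/s_0 - 1$, $v = t_1/t_0 - 1$ at $x$: the leading-order part is $av^2 - bu^2 = 0$, which factors only over $K = \QQ(\sqrt{ab})$ since $ab$ is not a square. The two branches at $x$ (and hence the preimages $y_1, y_2$ of $x$ under the normalization) are therefore defined over $K$ and swapped by the nontrivial Galois element, so the splitting field is $K = \QQ(\sqrt{ab})$. Because $ab > 0$, $K$ is a real quadratic field; in particular it embeds into $\RR = \QQ_v$ for the archimedean place $v$, giving $r_{y_1,v} = r_{y_2,v} = 2$, while $\#\infty_K = 2 > 1 = \#\infty_\QQ$ and $v$ splits in $K$, which takes care of conditions (ii) and (iii) of Corollary~\ref{cor:torus-infinite-numberfield}.

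For condition (i) of the same corollary, I plan to exhibit the explicit integral point $P = ([1:0], [1:0])$: the integrality criterion~\eqref{eq:intX3} is met (both gcd's equal $1$ and $s_0t_0 - s_1t_1 = 1$), $P$ lies on $S_{a,b}$ by direct substitution, and a short partial-derivative computation in the chart $s_0 = t_0 = 1$ at $(s_1,t_1)=(0,0)$ yields the nonzero gradient $(a, -b)$, confirming smoothness at $P$. Consequently $\#\mC_0(\ZZ) = \infty$ and Proposition~\ref{prop:approximation-toroidal}(ii) applies. For the numerical inputs, $S_{a,b}$ has bidegree $(2,2)$ on $\PP^1 \times \PP^1$ and meets each blowup center $z_i$ transversally with multiplicity $1$, so its strict transform has class $2H_1 + 2H_2 - E_1 - E_2$ in $X$, whence $\deg_{-K_X(D)}(S_{a,b}) = (H_1 + H_2) \cdot (2H_1 + 2H_2 - E_1 - E_2) = 4$. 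At the nodal point $x$ both branches are smooth, so $\mult_x(C_i) = 1$ for $i = 1, 2$.

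Feeding these values into Proposition~\ref{prop:approximation-toroidal}(ii) yields
\[
\alpha_\infty(x, \mC_0; -K_X(D)) = \frac{4}{\max\{2 \cdot 1,\; 2 \cdot 1\}} = 2,
\]
as required. The one potentially delicate point is exhibiting a generically regular integral point, but the explicit candidate $P = ([1:0],[1:0])$ works uniformly for every admissible pair $(a,b)$ (both entries are nonzero since $ab > 0$), so no genuine obstacle arises.
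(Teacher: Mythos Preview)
Your proposal is correct and follows essentially the same approach as the paper: identify the splitting field from the tangent directions at the node, exhibit an explicit regular integral point to invoke Corollary~\ref{cor:torus-infinite-numberfield}, and then plug the degree and branch data into Proposition~\ref{prop:approximation-toroidal}. The only cosmetic differences are your choice of integral point $([1:0],[1:0])$ versus the paper's $([0:1],[0:1])$, and writing the splitting field as $\QQ(\sqrt{ab})$ rather than $\QQ(\sqrt{b/a})$ (the same field); you also spell out the degree computation and the smoothness check at the integral point more explicitly than the paper does.
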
 
\begin{proof}

    Each of these curves $S_{a,b}$ has log anticanonical degree $4$, and admits the trivial integral point $([0:1],\,[0:1])$, so we are left to study their splitting fields. The node at $x$ has slope $\pm\sqrt{b/a}$, which is real quadratic if and only if $ab$ is positive and not a square.  In precisely those cases, the splitting field $\QQ(\sqrt{b/a})$ of the toroidal curve has an infinite group of units, hence the curve contains infinitely many integral points by Corollary~\ref{cor:torus-infinite-numberfield}. The deduction of the integral approximation constant follows from Proposition~\ref{prop:approximation-toroidal}.
\end{proof}
We remark that $\alpha(x,S_{a,b}\cap \mU;-K_X(D))=4$ if $ab>0$ and $ab$ is a square, and is infinite in all other cases, as the remaining curves only contain finitely many points each.

In \cite[\S3.4.1]{HuangBSMF}, a family of cuspidal rational curves, all with a cuspidal singularity at $x$, is constructed:
\begin{equation}\label{eq:cusp}
	R_{a,b}\colon a^2(t_1-t_0)^2s_0s_1+b^2(s_1-s_0)^2t_0t_1-2abs_1t_1(s_1-s_0)(t_1-t_0)=0,
\end{equation}
where the parameters $(a,b)$ are coprime non-zero integers, distinct from $(1,-1)$ and $(-1,1)$. Observe that the cases $(a,b)=(1,-1)$ or $(-1,1)$ correspond to reducible curves.
\begin{proposition}\label{prop:rab}
    If $(a,b)\neq(1,-1)$ or $(-1,1)$, then
    \begin{equation*}
        \alpha(x,R_{a,b}\cap \mU;-K_X(D))=2.
    \end{equation*}
\end{proposition} 
\begin{proof}
The parameters $(a,b)$ for this family represent the slope $a/b$ of $R_{a,b}$ at $x$. 
Its intersection multiplicity with the boundary can be computed to be  $\langle R_{a,b},D\rangle=2$ by noting that it meets the image of the boundary in $\PP^1\times \PP^1$ in $x$, $z_1$ and $z_2$, which means that $R_{a,b}$ intersects $D$ precisely at one point $x$, with multiplicity $2$, due to the cusp. In particular, these curves are log rational of degree four, making their integral approximation constants $2$ by Proposition~\ref{prop:approximation-log-rational}, as each of them contains the trivial integral point $([1:0],[1:0])$. 
\end{proof}  

\begin{figure}
\begin{center}
    \begin{minipage}{.5\textwidth}
    \begin{center}
        \includegraphics[width=.9\linewidth]{figures/p1_p1_2pts_100.pdf} 
    \end{center}
    \end{minipage}%
    \begin{minipage}{.5\textwidth}
    \begin{center}
        \includegraphics[width=.9\linewidth]{figures/p1_p1_2pts_350.pdf}
    \end{center}
    \end{minipage}

    \caption{Points whose height is bounded by $100$ (left) and $350$ (right), respectively, shown along a chart of $
    \PP^1\times \PP^1$ around $x$. The best approximants lie on the two `axes', achieving the approximation constant $1$. A smooth and a singular log rational curve, as well as a nodal toroidal curve achieving the essential approximation constant $2$ are shown in cyan, orange, and red, respectively.}\label{fig:curves}
    \end{center}
\end{figure}

Togogether with Lemma~\ref{lem:liouville-example}, this shows that the essential approximation constant is attained on rational curves --- simultaneously on smooth log rational, cuspidal log rational, and nodal toroidal curves (Figure~\ref{fig:curves}).

\begin{theorem}
We have $$\alpha(x;\mU \setminus (L_1\cap L_2);-K_X(D))=2.$$
    Moreover, Question \ref{q:essential} has an affirmative answer for $X$.
\end{theorem}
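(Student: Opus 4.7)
The plan is to prove $\alpha(x;\mU\setminus(L_1\cup L_2);-K_X(D))=2$ by matching lower and upper bounds, and then to read off the affirmative answer to Question~\ref{q:essential} from the families already constructed in Propositions~\ref{prop:cab}, \ref{prop:sab}, and \ref{prop:rab}. (I read the exceptional locus in the statement as $L_1\cup L_2$ rather than $L_1\cap L_2=\{x\}$, since removing the single point $x$ from $\mU$ does nothing.)

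For the lower bound, I would invoke the sharper inequality of Lemma~\ref{lem:liouville-example}. With coprime integer representatives $y=([s_0:s_1],[t_0:t_1])$, membership in $\mU\setminus(L_1\cup L_2)$ translates exactly to $s_0\ne s_1$ and $t_0\ne t_1$, so the lemma immediately yields $d(x,y)^2 H(y)\gg 1$ for all such $y$ near $x$, and hence $\alpha(x,\mU\setminus(L_1\cup L_2);-K_X(D))\ge 2$.

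For the matching upper bound, I would appeal to any one of the three families $\{C_{a,b}\}$, $\{S_{a,b}\}$, or $\{R_{a,b}\}$. For admissible parameters, their members are irreducible curves distinct from both $L_1$ and $L_2$, so each meets $L_1\cup L_2$ in only finitely many points. The infinite sequences of integral approximants realizing exponent $2$ produced in Propositions~\ref{prop:cab}, \ref{prop:sab}, and \ref{prop:rab} therefore lie, up to finitely many exceptions, inside $\mU\setminus(L_1\cup L_2)$, giving $\alpha(x,\mU\setminus(L_1\cup L_2);-K_X(D))\le 2$. For Question~\ref{q:essential} I would then take $V=X\setminus(L_1\cup L_2)$ and the pencil $\{C_{a,b}\}$, parametrized by the open subset $B=\{[a:b]\in\PP^1_\QQ:ab\ne 0\}$, as the deformation family: its members pass through $x$, sweep out a Zariski dense subset of $X$ (being a pencil), and each has approximation constant exactly $2$ by Proposition~\ref{prop:cab}. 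The families $\{S_{a,b}\}$ or $\{R_{a,b}\}$ would do equally well, illustrating that the sweeping family is far from unique.

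I do not foresee a serious obstacle: the two Liouville-type bounds of Lemma~\ref{lem:liouville-example} and the explicit computations on each family of rational curves have already done all the analytic work. The only careful point is bookkeeping — identifying the exceptional locus $\{s_0=s_1\}\cup\{t_0=t_1\}$ appearing in Lemma~\ref{lem:liouville-example} with $L_1\cup L_2$, and checking that a generic member of each family meets this locus in only finitely many points so that the sequences of best approximants persist after restricting to $V$.
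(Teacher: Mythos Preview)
Your proposal is correct and follows essentially the same approach as the paper: the lower bound via the second inequality of Lemma~\ref{lem:liouville-example} (with the identification $\mU\setminus(L_1\cup L_2)\leftrightarrow\{s_0\ne s_1,\ t_0\ne t_1\}$), the upper bound via any of Propositions~\ref{prop:cab}, \ref{prop:sab}, or \ref{prop:rab}, and the answer to Question~\ref{q:essential} by observing that each of the three families sweeps out a Zariski dense subset. Your reading of the exceptional locus as $L_1\cup L_2$ rather than $L_1\cap L_2$ is also the intended one.
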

\begin{proof}
 We use Propositions~\ref{prop:cab}, \ref{prop:sab}, and \ref{prop:rab} as upper bounds and the second half of Lemma~\ref{lem:liouville-example} for the matching lower bound.
    Finally, to answer Question \ref{q:essential}, we note that the union of any one of the three families $(C_{a,b})$, $(S_{a,b})$, and $(R_{a,b})$ forms a Zariski dense subset (though the union of rational points on the cuspidal curves $R_{a,b}$ forms a thin set).
\end{proof}

\begin{remark}
For any $W\subset X$, and $B\gg 1$, let us define 
$$N(W;B)\coloneq \#\{P\in\mU(\ZZ)\cap W:H(P)\leq B\}.$$
Then, using the parametrizations obtained in Section~\ref{se:ratcurve}, it is straightforward to determine
\begin{align*}
N(L_i;B)    &\asymp B && \text{for $i\in \{1,2\}$,}\\
N(C_{a,b};B)&\asymp_{a,b} B^\frac{1}{2} && \text{for coprime $(a,b)\in \ZZ^2$ with $ab\ne 0$,}\\
N(S_{a,b};B)&\asymp_{a,b} \log B&&\text{for coprime $(a,b)\in \ZZ^2$ with $ab>0$ not a square,}\\
 N(R_{a,b};B)&\asymp_{a,b} B^{\frac{1}{4}} && \text{for coprime $(a,b)\in \ZZ^2$ with $ab\neq0,a+b\ne 0$.}
\end{align*}

We thus observe that, although the last three families all achieve the same approximation constant, the number of points traced on them exhibits quite different orders of magnitude.
\end{remark}

\section{Toric varieties}\label{se:toric}

	Let $X$ be the smooth projective split toric variety over $k$ associated with a regular complete fan $\Sigma$ in $\RR^n$ admitting at least one strictly convex piecewise linear function,
    and let $\mX$ the corresponding toric $\fo$-scheme. 
    We write $\Sigma^{\max}$ for the set of all $n$-dimensional cones (called \emph{maximal cones}). A one-dimensional cone $\rho$ is called a \emph{ray}, with a primitive integral generator $n_\rho$ in $\RR^n$ and corresponding torus-invariant divisor $D_\rho \subseteq X$;  we write $r=\operatorname{rank}\operatorname{Pic}(X)$. The set of all rays is denoted by $\Sigma(1)$, and $\#\Sigma(1)=n+r$. 
	The morphism
	\begin{equation*}
		\phi\colon \RR^{n+r} = \RR^{\Sigma(1)}\to \RR^n,\quad \rho\mapsto n_\rho,
	\end{equation*}
	and its dual induce short exact sequences
	\begin{equation}\label{eq:lattice-sequences}
		\begin{tikzcd}[row sep = tiny]
			0 \ar[r] & N_1(X)_\RR \ar[r] & \RR^{\Sigma(1)} \ar[r, "\phi"] & \RR^n \ar[r]& 0 & \text{and} \\
			0 \ar[r] & \RR^n\ar[r,"\phi^*"] & \RR^{\Sigma(1)} \ar[r] & \Pic(X)_\RR \ar[r] & 0;
		\end{tikzcd}
	\end{equation}
	writing $T_\NS = \widehat{\Pic(X)}\cong \Gm^r$ for the Néron--Severi $k$-torus and $T = \Gm^n$ for the open orbit of $X$, restricting the latter sequence to the character lattices induces one of split tori:
	\begin{equation}\label{eq:short-ex-tori}
		\begin{tikzcd}
			1\ar[r] & T_\NS \ar[r]& \Gm^{\Sigma(1)} \ar[r] & T \ar[r]& 1.
		\end{tikzcd}
	\end{equation}

	Let $Y$ be the quasi-affine toric variety constructed by pulling back the fan $\Sigma$ along $\phi$, called the \emph{principal universal torsor} after Salberger~\cite{Salberger}: it is a $T_\NS$-torsor via the restriction of the torus action along~\eqref{eq:short-ex-tori} and the morphism $\pi\colon Y\to X$ induced by $\phi$. Similarly, the $\fo$-scheme $\mY$ associated with the same fan is a $\mT_\NS$-torsor over $\mX$, where $\mT_\NS\cong \Gmok^r$ is the $\fo$-torus dual to $\Pic(X)$.

	We recall the following notion that plays a key role in Batyrev's classification of higher-dimensional toric varieties \cite{Batyrev}.
	A collection of rays $\cP=\{\rho_1,\dots,\rho_s\}$ is called a \emph{central primitive collection} if
	\begin{equation*}
		\sum_{i=1}^{s} n_{\rho_i}=0,
	\end{equation*}
	the rays $\rho_1,\dots,\rho_s$ do not generate a cone of $\Sigma$, and all proper subsets of them do generate one. They give rise to \emph{minimal rational curves} (in the sense of deformation theory): as elements of the kernel of $\phi$, they are elements of the dual lattice to $\Pic(X)$, and given a torus-invariant divisor $L = \sum_{\rho\in \Sigma(1)} a_\rho D_\rho$, using~\eqref{eq:lattice-sequences}, the $L$-degree of any associated curve is simply computed to be
	\begin{equation*}
		\delta_{\cP} = \sum_{\rho\in \cP} a_\rho,
	\end{equation*}
	which is an integer also depending on $L$.
	See \cite[\S\,2B]{HuangANT} for more details, for instance. 
	
	In order to study integral points on a log pair, we need a torus-invariant divisor. For any nonempty subset of rays $\cA\subseteq \Sigma(1)$, we denote by  $D_{\cA}=\sum_{\rho\in\cA } D_\rho$ its corresponding divisor in $X$, and by $\mD_{\cA}=\sum_{\rho\in\cA } \mD_\rho$ the corresponding divisor in $\mX$. Moreover, denote by $U_{\cA}=X\setminus D_\cA$ the corresponding open toric subvariety and $\mU_{\cA}=\mX \setminus \mD_{\cA}$ the corresponding open toric subscheme. 
    
In previous work~\cite[Thm.~1.2]{HuangANT}, the first author proved that the rational approximation constant on the open orbit equals the minimal degree among all minimal rational curves. We shall compute the integral approximation constant under the following hypotheses for a family of log pairs $(X,D_\cA)$ of a special shape: 

\begin{theorem}\label{thm:toricgeneralrestrictive}
   	Let $X$ be a smooth, projective toric variety whose pseudoeffective cone is simplicial with corresponding toric $\fo$-scheme $\mX$, $L$ be an ample line bundle, and $v$ be an archimedean place.
	Moreover, let $\cA$ consist of a single element of a central primitive collection $\cP$, and  
    let $\delta_\cP$ be the $L$-degree of any minimal rational curve corresponding to this collection.
	
	If $x\in D_{\cA}^\circ(k)=D_{\cA}(k)\setminus \cup_{\rho\notin\cA}D_{\rho}(k)$, then
		\begin{equation*}
			\alpha_v(x,\mU_{\cA};L)=\delta_\cP.
		\end{equation*}
\end{theorem}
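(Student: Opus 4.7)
The proof proceeds via matching upper and lower bounds on $\alpha_v(x,\mU_\cA;L)$.

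For the upper bound $\alpha_v(x,\mU_\cA;L) \leq \delta_\cP$, the plan is to realize the primitive class $[\cP] = \sum_{\varrho \in \cP} e_\varrho \in N_1(X)$ (which sits in $\ker\phi$ by the central primitive relation) by an explicit toric rational curve $\mC \subset \mX$ passing through $x$. Concretely, one constructs $C$ as the closure of the orbit of $x$ under the one-parameter subgroup of $T$ corresponding to $[\cP]$, or equivalently, in Cox coordinates, by varying $y_{\varrho_i}$ for $\varrho_i \in \cP$ along a linear pencil through the Cox lift of $x$ while fixing $y_\varrho$ for $\varrho \notin \cP$. The defining intersection numbers of a primitive collection yield $C \cdot D_{\varrho_i} = 1$ for $\varrho_i \in \cP$ and $C \cdot D_\varrho = 0$ otherwise, so $C \cap D_\cA = \{x\}$ scheme-theoretically with transversal intersection $\mult_x(C) = 1$, and $(C, C \cap D_\cA)$ is a smooth log rational curve of log $L$-degree $\delta_\cP$. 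Translating by a suitable element of $T(\fo)$ ensures that $\mC_0(\fo)$ contains a generically regular integral point, and Proposition~\ref{prop:approximation-log-rational} delivers
\[
\alpha_v(x,\mU_\cA;L) \;\leq\; \alpha_v(x,\mC_0;L) \;=\; \frac{\deg_L C}{\mult_x C} \;=\; \delta_\cP.
\]

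For the lower bound, the plan is to establish a Liouville-type inequality $d_v(x,y)^{\delta_\cP} H_L(y) \gg 1$ for all $y \in \mU_\cA(\fo)$ sufficiently close to $x$. Lift such $y$ to Cox coordinates $(y_\varrho)_{\varrho \in \Sigma(1)}$ on a suitable twist of the universal torsor $\mY$, handling the class group $\cl_k$ exactly as in the proof of Proposition~\ref{prop:a1para}; the $\mT_\NS(\fo)$-action then normalizes the nonarchimedean valuations of the coordinates. Integrality of $y$ on $\mU_\cA = \mX \setminus \mD_{\varrho_0}$ forces $y_{\varrho_0}$ to be a unit in $\fo$ (up to finitely many representatives), so the product formula yields
\[
\prod_{v' \in \infty_k} |y_{\varrho_0}|_{v'} \;=\; 1,
\]
pairing a small $|y_{\varrho_0}|_v$ with large values at the other archimedean places. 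Because $x \in D_{\varrho_0}^\circ$ lies in every affine toric chart corresponding to a maximal cone $\sigma \ni \varrho_0$ --- in which $y_{\varrho_0}$ is a bounded multiple of the local equation for $D_{\varrho_0}$ --- one obtains the distance asymptotic $d_v(x,y) \asymp |y_{\varrho_0}|_v$.

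The main obstacle will be the archimedean height estimate
\[
H_L(y) \;\gg\; \prod_{v' \in \infty_k \setminus \{v\}} |y_{\varrho_0}|_{v'}^{\delta_\cP},
\]
which, combined with the product formula and distance asymptotic, yields the Liouville bound. Using the standard Batyrev--Tschinkel expression of the local height in Cox coordinates as a maximum over maximal cones of monomials weighted by the $a_\varrho$, this reduces to a combinatorial statement: after $\mT_\NS$-normalization the primitive relation $\sum_{\varrho \in \cP} n_\varrho = 0$ forces $|y_\varrho|_{v'}$ for $\varrho \in \cP$ to scale comparably, and the simpliciality of the pseudoeffective cone (which pins down the chamber structure dual to the Mori cone) ensures that the optimal maximal cone $\sigma$ extracts a monomial of total $\cP$-weight $\delta_\cP = \sum_{\varrho \in \cP} a_\varrho$ among the coordinates indexed by $\cP$. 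This step parallels the Liouville argument in \cite[Thm.~1.2]{HuangANT} for the rational approximation constant on the open orbit; the new ingredient is the unit constraint on $y_{\varrho_0}$, which promotes the optimum from the minimum over \emph{all} central primitive collections to the minimum restricted to those containing $\varrho_0$ --- and under the simpliciality hypothesis this specializes to $\cP$ itself.
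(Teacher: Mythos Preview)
Your overall architecture matches the paper's: an explicit minimal log rational curve through $x$ in the class $\cP$ supplies the upper bound via Proposition~\ref{prop:approximation-log-rational}, and a Liouville inequality in Cox coordinates supplies the lower bound. The upper bound is essentially what the paper does, although the paper produces the generically regular integral point on $\mC_0$ concretely (by exhibiting a point whose relevant Cox coordinates vanish and invoking the torsor coprimality condition), rather than by ``translating by $T(\fo)$'', which as stated would move $x$ off the curve.

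The lower bound, however, has a genuine gap. Your two key claims
\[
d_v(x,y)\;\asymp\;|y_{\varrho_0}|_v
\qquad\text{and}\qquad
H_L(y)\;\gg\;\prod_{v'\ne v}|y_{\varrho_0}|_{v'}^{\delta_\cP}
\]
are phrased in terms of a single Cox coordinate, which is only defined up to the $\mT_{\NS}(\fo)$-action; normalizing the nonarchimedean valuations still leaves an $(\fo^\times)^r$-ambiguity, and neither estimate is invariant under it. Concretely, in any chart $U_\sigma$ with $\varrho_0\in\sigma(1)$ the local equation for $D_{\varrho_0}$ is not $y_{\varrho_0}$ but the $T_{\NS}$-invariant ratio $y_{\varrho_0}/M$ for a monomial $M$ in the remaining Cox coordinates (cf.\ the paper's chart~\eqref{eq:parahyp}); the factor $|M|_v$ is \emph{not} bounded for an arbitrary integral lift, so $d_v(x,y)\asymp|y_{\varrho_0}|_v$ can fail. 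A similar issue afflicts the height inequality. One can manufacture lifts for which one of your two estimates holds and the other does not, so the combination does not close.

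The paper sidesteps this by working throughout with $T_{\NS}$-invariant quantities. It singles out two specific maximal cones: the distinguished cone $\sigma_0$ witnessing simpliciality of the pseudoeffective cone, and the adjacent cone $\sigma_1$ obtained by swapping $\varrho_0\leftrightarrow\varrho_{n+1}$ inside $\cP$ (this is where \cite[Lem.~6.6]{HuangANT} enters). The crucial identity is that the ratio
\[
\frac{\bX(P)^{D(\sigma_1)}}{\bX(P)^{D(\sigma_0)}}
\;=\;
\Bigl(\tfrac{y_{\varrho_0}}{M}\Bigr)^{\delta_\cP}
\]
is simultaneously the $\delta_\cP$-th power of the local coordinate cutting out $D_{\varrho_0}$ and the quotient of two height monomials. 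Bounding $H_L$ from below by $|\bX^{D(\sigma_1)}|_{v'}$ at $v'\ne v$ and by $|\bX^{D(\sigma_0)}|_v$ at $v$, and then applying the product formula to the effective divisor $D(\sigma_1)$, yields the Liouville bound with no appeal to integrality at all: the inequality $H_L(P)\,d_v(P,x)^{\delta_\cP}\gg 1$ already holds for \emph{rational} points of $U_\cA$. Your unit constraint on $y_{\varrho_0}$ is therefore unnecessary for the lower bound, and the ``new ingredient'' you describe is not in fact used.
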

In particular, this confirms Conjecture~\ref{conj:integralmckinnon} for the log pair $(X,D_\cA)$ and the choice of $x\in D_{\cA}^\circ(k)$.

\begin{remark}\label{rmk:toric}
\begin{enumerate}
    \item Note that, as every ray can be contained in at most one central primitive collection~\cite[Lem.~6.4]{HuangANT}, the degree $\delta_\cP$ is unique.   
        \item Note that removing $\cA$ from $\Sigma$ preserves at least one (smooth) maximal cone, corresponding to an open toric subscheme isomorphic to $\dA_\fo^n$ of $\mU_\cA$. In particular, $\mU_{\cA}(\fo)$ is not thin.
        Moreover, as $-K_X(D_{\CA})$ remains big, it follows from work of Santens~\cite[Thm.~1.1]{Santens} that $\mU_{\cA}(\fo)$ is equidistributed.
        \item\label{enum:simplicial-eff} That the pseudoeffective cone is simplicial has a simple combinatorial description: it holds if and only if there exists a maximal cone $\sigma_0\in\Sigma^{\max}$ such that all the generators in $\Sigma(1)\setminus\sigma_0(1)$ are linear combinations of those in $\sigma_0(1)$ with negative integer coefficients~\cite[Lem.~6.3]{HuangANT}.
    \item More generally, for every central primitive collection $\cP$ we can fix a ray $\rho_{\cP}\in\cP$, and let $$\cA=\bigcup_{\cP\text{ central primitive}}\{\rho_{\cP}\}\subseteq \Sigma^{(1)}.$$ Let $A\subseteq \cA$ be a maximal face of the Clemens complex (that is, a maximal subset of $\cA$ that generates a cone in $\Sigma$),  
         let $Z_{A}=\bigcap_{\rho\in A}D_{\rho}$, and let
		\begin{equation*}
			x\in Z_A^\circ(k)=(Z_A\setminus\bigcup_{\rho\notin A}D_{\rho})(k).
		\end{equation*}
		Then a similar argument to the one that is to follow shows that
		\begin{equation*}
			\alpha_v(x,\mU_{\cA};L)=\min\left\{\deg_L(C)\ \middle|\ 
			\substack{C\text{ is minimal and has class }\cP, \\ \cP\cap A\neq\emptyset} 
			\right\}.
		\end{equation*}
		 For simplicity of exposition, we content ourselves with the more restrictive version of Theorem~\ref{thm:toricgeneralrestrictive} above.
\end{enumerate}
    
\end{remark}

\begin{proof}[Proof of Theorem~\ref{thm:toricgeneralrestrictive}]
	For the rest of this section, the data mentioned in the theorem (in particular $\mX$, $x$, and $L$) are treated as fixed and all implied constants may depend on them.
	We begin by noting that only integral points on the open orbit $T = X\setminus \bigcup_{\rho\in \Sigma(1)}D_\rho$ can contribute to the approximation constant, that is,
	\begin{equation*}
        \alpha_v(x,\mU_{\cA};L) = \alpha_v(x,\mU_{\cA}\cap T;L).
    \end{equation*}
	Indeed, the function
	\begin{equation*}
		X(k)\setminus T \to \RR_{\geq 0},\quad y\mapsto d(x,y)
	\end{equation*}
	is continuous and thus attains its minimum on the compact set $\bigcup_{\rho \not \in \cA} D_\rho$. This minimum is nonzero as the set does not contain $x$ by assumption.
	Moreover, for $\rho\in \cA$, the divisor $D_\rho$ does not contain any integral points on $\mU_\cA = \mX \setminus \mD_\rho$; hence, integral points outside $T$  cannot contribute to the approximation constant.

	For the rest of the argument, we basically follow the blueprint of \cite[\S\,6]{HuangANT}, but need to take into account the integrality condition imposed by $\cA$.
	Let $\sigma_0\in\Sigma^{\max}$ be a maximal cone as in Remark~\ref{rmk:toric}~(\ref{enum:simplicial-eff}). We may label the rays $\Sigma(1) = \{\rho_1,\dots,\rho_{n+r}\}$ so that $\sigma_0(1)=\{\rho_1,\dots,\rho_n\}$; for the remaining rays, we obtain a relation
	\begin{equation}\label{eq:relationR}
	\CR_{j}\colon n_{\rho_{n+j}}+\sum_{i=1}^{n} b_{i,j}n_{\rho_i}=0,\quad 1\leq j\leq r
	\end{equation}
	between rays with positive coefficients $b_{i,j}\in\ZZ_{\geq 0}$ for $1\leq i\leq n$ by the same remark.

	Appealing to Salberger's seminal work \cite{Salberger}, rational points on split toric varieties can be parametrized in terms of Cox coordinates of universal torsors. We shall begin by sketching some properties of this parametrization and refer the reader to \cite[\S\,3, \S\,6B]{HuangANT} for more details. Let $\cC$ be a set of representatives of the ideal class group of $k$, each of them an ideal of $\fo$.
	Then there exists a family $(\pi_{\bc}:\mY_{\bc}\to \mX)_{\bc\in \cC^r}$ of \emph{twisted universal torsors} such that
	\begin{equation*}
		X(k)=\mX(\fo)=\bigcup_{\bc\in\cC^r}\pi_{\bc}(\mY_{\bc}(\fo)).
	\end{equation*}

	Each element of this family is a $\mT_\NS$-torsor over $\mX$, and its generic fiber is isomorphic to the quasi-affine principal universal torsor $Y$.
	The relations \eqref{eq:relationR} being equivalent to 
    \begin{equation}\label{eq:picbasis}
        [D_{\rho_i}]=\sum_{j=1}^r b_{i,j} [D_{\rho_{n+j}}] \quad \text{in }\operatorname{Pic}(X),\ 1\leq i\leq n,
    \end{equation}
    the classes of the divisors $D_{\rho_{n+1}},\cdots,D_{\rho_{n+r}}$ form a basis of $\Pic(X)$, which we shall work in.
	For a divisor $D$ of class $[D]=\sum_{j=1}^{r}d_j[D_{\rho_{n+j}}]\in\Pic(X)$ and a tuple $\bc=(c_1,\cdots,c_r)\in \cC^r$, we write $\bc^D$ for the ideal $\prod_{j=1}^{r}c_j^{d_j}$.
	Each $\mY_{\bc}(\fo)$ can be viewed as the set
	\begin{equation*}
		\bigoplus_{\rho\in\Sigma(1)} \bc^{D_{\rho}}\subset\fo^{\Sigma(1)}
	\end{equation*}
	under the embedding
	\begin{equation*}
		 \mY_{\bc}(\fo) \subset Y(k)\subset k^{\Sigma(1)}.
	\end{equation*}
	Every $P\in X(k)$ can be lifted to an integral point
	\begin{equation*}
		\bX(P) = (X_1(P),\dots, X_{n+r}(P))\in \mY_{\bc}(\fo)
	\end{equation*}
	that is unique modulo the action of the Néron--Severi torus $\mT_{\operatorname{NS}}$,
	for a $\bc\in\cC^r$ that is uniquely determined by $P$.

	For any $\bx\in k^{\Sigma(1)}$ and torus invariant divisor $D=\sum_{\rho\in\Sigma(1)}a_{\rho} D_{\rho}$, we write $\bx^{D}$ for the product $\prod_{\rho\in\Sigma(1)}x_{\rho}^{a_{\rho}}$ whenever it is well-defined. 
	The height of a rational point can be described explicitly by means of its lift as above.
	The line bundle $L$ is globally generated since it is assumed to be ample. Write $L=\CO_X(D)$ for a torus-invariant divisor $D$. Then by \cite[Prop.~3.4]{HuangANT},
    \begin{equation}\label{eq:height}
        H_L(P)\asymp 
        \prod_{v\in\infty_k} \sup_{\sigma\in\Sigma^{\max}}\left|\bX(P)^{D(\sigma)}\right|_{v},
    \end{equation}  where for any $\sigma\in\Sigma^{\max}$, let $D(\sigma)$ be the divisor that is linearly equivalent to $D$ and whose support lies within $\bigcup_{\rho\in \Sigma(1)\setminus\sigma(1)} D_{\rho}$ (cf. \cite[Eq.~(28)]{HuangANT}).
    
        By assumption, there is a central primitive collection $\cP$ such that $\cA\subset\cP$, and we fix one now. Then by \cite[Lem.~6.6]{HuangANT}, $\#\cP\setminus\sigma_0(1)=1$. So we may relabel the rays so that $\cP=\{\rho_1,\cdots,\rho_{t},\rho_{n+1}\}$ for a certain $1\leq t\leq n$. Let  $\sigma_1,\dots,\sigma_t\in\Sigma^{\max}$, labeled such that
	\begin{equation*}
		\sigma_{i}\cap \sigma_0
		= \RR_{\geq 0}\rho_1+\cdots +
			\widehat{\RR_{\geq 0}\rho_{i}}+\cdots+\RR_{\geq 0} \rho_n,\quad 1\leq i\leq t.
	\end{equation*} Here the notation $\widehat{}$ means that the term is removed from the sum.
By \cite[Lem.~6.6]{HuangANT}, we have
\begin{equation}\label{eq:sigmai}
    \sigma_{i}=\RR_{\geq 0}\rho_1+\cdots +
\widehat{\RR_{\geq 0}\rho_{i}}+\cdots+\RR_{\geq 0} \rho_n+\RR_{\geq 0} \rho_{n+1},\quad 1\leq i\leq t.
\end{equation}
        
We begin with the case $\cA\subset \sigma_0(1)$ and may assume $\cA=\{\rho_1\}$. 
Let $U_{\sigma_0}\subset X$ be the open affine toric subvariety induced by the maximal cone $\sigma_0$; we shall use its parametrization
	\begin{equation}\label{eq:parahyp}
		\begin{split}
			\pi\colon \pi^{-1}U_{\sigma_0}&\longrightarrow U_{\sigma_0}\\
			(X_1,\cdots,X_{n+r})&\longmapsto(y_1,\dots,y_n)= \left(\frac{X_1}{\prod_{j=1}^r
				X_{n+j}^{b_{1,j}}},\dots,\frac{X_n}{\prod_{j=1}^r
				X_{n+j}^{b_{n,j}}}\right)
		\end{split}
	\end{equation}
(cf.~\cite[\S\,6B1]{HuangANT}).

 By assumption, $x\in D_{\cA}^{\circ}(k)$, so that
\begin{equation*}
	x=(0,y_2(x),\dots,y_n(x))\in U_{\sigma_0}
\end{equation*}
with $y_i(x)\neq 0$ for $2\leq i\leq n$. Moreover, 
for any point $P\in (U_{\cA}\cap U_{\sigma_0})(k)$, every lift $\bX(P)=(X_1(P),\cdots,X_{n+r}(P))\in \mY_{\bc}(\fo)\subset\fo^{n+r}$ for an appropriate $\bc\in\cC^r$ satisfies $X_1(P)\neq 0$ and $X_{n+j}(P)\neq 0$ for 1$\leq j\leq r$. Fixing a place $v\in\infty_k$,
by~\eqref{eq:distance_A1}, a lower bound for any fixed $v$-adic distance function is
	\begin{equation}\label{eq:dist}
		d_v(P,x)\gg \min\left(1,\left|\frac{X_1(P)}{\prod_{j=1}^{r}X_{n+j}(P)^{{b}_{1,j}}}\right|_v\right).
	\end{equation}

Recall from \eqref{eq:sigmai} that the maximal cone $\sigma_1$, without the ray $\rho_1$ and adjacent to $\sigma_0$, contains the ray $\rho_{n+1}$. It gives rise to the divisor $D(\sigma_1)$. Let $c_1$ be the multiplicity of $D_{\rho_1}$ in $D(\sigma_1)$. Then the computation in \cite[(54)]{HuangANT} shows that $$c_1=\delta_\cP.$$  According to \cite[Lem.~8.9, Rem.~11.23]{Salberger}, together with the divisor $D(\sigma_0)$, we have $$\frac{\bX(P)^{D(\sigma_1)}}{\bX(P)^{D(\sigma_0)}}=\left(\frac{X_1(P)}{\prod_{j=1}^{r}X_{n+j}(P)^{{b}_{1,j}}}\right)^{\delta_{\cP}}.$$ Invoking \eqref{eq:height}, we obtain the bound 
\begin{equation}\label{eq:case1}
    \begin{split}
        H_L(P)&\gg \prod_{v'\in\infty_k\setminus\{v\}}\left|\bX(P)^{D(\sigma_1)}\right|_{v'}\times \left|\bX(P)^{D(\sigma_0)}\right|_{v}\\ &\gg \left|\frac{\bX(P)^{D(\sigma_0)}}{\bX(P)^{D(\sigma_1)}}\right|_{v}=\left|\frac{X_1(P)}{\prod_{j=1}^{r}X_{n+j}(P)^{{b}_{1,j}}}\right|_{v}^{-\delta_{\cP}}
    \end{split} 
\end{equation}
for the height function, where for the second inequality we use the product formula, all the implied constants being uniform. From this and \eqref{eq:dist} we easily deduce that $$H_L(P)d_v(P,x)^{\delta_\cP}\gg 1.$$  This shows that $$\alpha_v(x,\mU_{\cA}\cap U_{\sigma_0};L)\geq \alpha_v(x,U_{\cA}\cap U_{\sigma_0};L)\geq \delta_\cP.$$

 To get an upper bound, we now lift $x\in D_{\cA}^{\circ}(k)$ to
 \begin{equation*}
	\bX(x)=(0,x_2,\cdots,x_{n+r})
    \in\mY_{\bc(x)}(\fo)\subset \fo^{n+r}
 \end{equation*}
 	for a fixed $\bc(x)\in \cC^r$. 
    Let $\varepsilon\in \fo^\times$ be fixed. We consider the rational curve $\phi\colon \PP^1\to C\subset X$ defined by 
    \begin{equation}\label{eq:primitive-curve-param}
        (0,0)\neq(u,v)\mapsto \pi(u\varepsilon,vx_2,\cdots,vx_t,x_{t+1},\cdots,x_n,vx_{n+1},x_{n+2},\cdots,x_{n+r}).
    \end{equation}
    Let $\mC$ be the Zariski closure of $C$ in $\mX$.
Note that $\phi(u,v)\in D_\cA$ precisely if the first entry on the right-hand side of~\eqref{eq:primitive-curve-param} vanishes, that is, if $u=0$, and this intersection point is $C\cap D_\cA = \{x\}$. In particular, $C\not\subset D_{\cA}$. Moreover, as $x$ does not lie in any of the remaining torus-invariant divisors, the curve $C$ cannot be contained in any of these divisors, so that it meets the open orbit.  By \cite[Thm.~2.2]{HuangANT}, $C$ is a (smooth) minimal rational curve
corresponding to $\cP$.
Let $C_0=C\cap U_{\cA}$ be the open curve and let $\mC_0=C\cap\mU_{\cA}$.
The point $$P_x=\pi(\varepsilon,0,\cdots,0,x_{t+1},\cdots,x_n,0,x_{n+2},\cdots,x_{n+r})\in C_0(k)$$ lies in  $\bigcap_{\rho\in\cP\setminus\{\rho_1\}}D_{\rho}$ and admits a lift $\bX(P_x)=(X_i(P_x))_{1\leq i\leq n+r}\in \mY_{\bc(x)'}(\fo)$ for a certain $\bc(x)'\in\cC^r$. 

We recall that the principal universal torsor $Y$ is the complement in $\dA^{\Sigma(1)}$ of union of all sets of the form
\begin{equation*}
	\bigcap_{\rho\in\cI}\{X_{\rho}=0\}
\end{equation*}
where $\cI\subset\Sigma(1)$ ranges over all sets of rays such that
$\bigcap_{\rho\in\cI} D_{\rho}=\emptyset$, that is, such that the rays do not generate a cone in $\Sigma$ (cf. \cite[Eq.~(22)]{HuangANT}). In particular, the primitive collection
$\cP = \{\rho_1,\cdots,\rho_t,\rho_{n+1}\}$ is such a set, so that the corresponding divisors do not meet; hence, the Cox coordinates $X_1,\dots, X_t, X_{n+1}$ are not allowed to vanish simultaneously for rational points in $Y(k)$ and must be coprime for any integral point on $\mY$, which in turn translates to the coprimality condition 
\begin{equation*}
	X_{n+1}(P_x)\bc(x)'^{-D_{\rho_{n+1}}}+\sum_{1\leq i\leq t} X_i(P_x)\bc(x)'^{-D_{\rho_i}}=\fo
\end{equation*}
on the twisted torsors $\mY_{\bc}$ as in \cite[Thm.~2.7 (iii)]{FP}: it means that the corresponding point avoids the vanishing locus of the \emph{twisted ideal}
\begin{equation*}
	(X_1,\dots,X_t,X_{n+r})_{\bc}
\end{equation*}
as defined by Frei and Pieropan~\cite[Def.~2.4]{FP}.
But $X_{n+1}(P_x)=X_2(P_x)=\cdots=X_{t}(P_x)=0$, so that
	\begin{equation*}
		X_1(P_x)\bc(x)'^{-D_{\rho_1}}=\fo.
	\end{equation*}
	This means that $\bX(P_x)$ in fact avoids the vanishing locus of the twisted ideal $(X_0)_{\bc(x)'}$, that is, the preimage of $\mD_{\rho_0}$ in $\mY_{\bc}$ (that this is in fact the preimage is analogous to the identity \cite[Eq.~(2.13)]{MR4822120}).
	From this, we may deduce that its image $P_x$ in fact lies in lies in the image $\mU_{\cA} = \mX\setminus \mD_{\rho_0}$. 
	We have thus obtained a regular integral point on $\mC_0$, and it follows from Proposition~\ref{prop:a1para} that $$\alpha_v(x,\mU_{\cA}\cap U_{\sigma_0};L)\leq\alpha_v(x,\mC_0\cap U_{\sigma_0};L)=\alpha_v(x,\mC_0;L)=\delta_\cP.$$

We next address the case $\cA=\{\rho_{n+1}\}\not\subset \sigma_0(1)$. 
We have seen that the maximal adjacent cone $\sigma_1$ satisfies $\rho_{n+1}\in\sigma_1(1)$. 
That $\cP$ is central primitive means that 
\begin{equation*}
	n_{\rho_1}=-\sum_{i=2}^{t}n_{\rho_i}-n_{\rho_{n+1}};
\end{equation*}
using this, we may rewrite the relations $\CR_j$ with $2\le j\le r$ as in~\eqref{eq:relationR} as
\begin{align*}
    n_{\rho_{n+j}}&=-\sum_{\substack{2\leq i\leq n}}b_{i,j}n_{\rho_i}+b_{1,j}\left(\sum_{\substack{2\leq i\leq t}}n_{\rho_i}+n_{\rho_{n+1}}\right)\\ &=-\sum_{\substack{2\leq i\leq t}}(b_{i,j}-b_{1,j})n_{\rho_i}-\sum_{t+1\leq i\leq n} b_{i,j}n_{\rho_i}+b_{1,j}n_{\rho_{n+1}}.
\end{align*}
 For any $P\in U_{\sigma_1}(k)$, let $$(y_2'(P),\cdots,y_{n}'(P),y_{n+1}'(P))$$ be the coordinates under the parametrization of $U_{\sigma_1}$. We then have
\begin{equation*}
	y_{n+1}'(P)=\frac{\prod_{1\leq j\leq r}X_{n+j}(P)^{b_{1,j}}}{X_1(P)}
\end{equation*}  
for  any lift $\bX(P)$.
Now $x\in U_{\sigma_1}(k)$ and $y_{n+1}'(x)=0$.
For any point $P$ in $(U_{\sigma_1}\cap U_{\cA})(k)$, we have $y_{n+1}'(P)\neq 0$. Hence, fixing a place $v\in\infty_k$, a lower bound of the relevant $v$-adic distance function is given by
\begin{equation}\label{eq:dist2}
	d_v(P,x)\gg \min\left(1,\left|\frac{\prod_{1\leq j\leq r}X_{n+j}(P)^{b_{1,j}}}{X_1(P)}\right|_v\right).
\end{equation} 
By \eqref{eq:dist2}, based on a variant of \eqref{eq:case1}, we deduce that \begin{align*}
H_L(P)&\gg \prod_{v'\in\infty_k\setminus\{v\}}\left|\bX(P)^{D(\sigma_0)}\right|_{v'}\times \left|\bX(P)^{D(\sigma_1)}\right|_{v}\\ &\gg \left|\frac{\bX(P)^{D(\sigma_1)}}{\bX(P)^{D(\sigma_0)}}\right|_{v}=\left|\frac{X_1(P)}{\prod_{j=1}^{r}X_{n+j}(P)^{{b}_{1,j}}}\right|_{v}^{\delta_{\cP}},\end{align*} from which we again obtain $$H_L(P)d_v(P,x)^{\delta_\cP}\gg 1.$$ 
These all together imply that $$\alpha_v(x,\mU_{\cA}\cap U_{\sigma_1};L)\geq \alpha_v(x,U_{\cA}\cap U_{\sigma_1};L)\geq\delta_\cP.$$

Finally, we lift $x$ to
\begin{equation*}
	\bX(x)=(x_1,\cdots,x_t,x_{t+1},\cdots,x_n,0,x_{n+2},\cdots,x_{n+r}),
\end{equation*}
and let $C$ be the rational curve defined by 
\begin{equation*}
	(0,0)\neq(u,v)\mapsto \pi(vx_1,\cdots,vx_t,x_{t+1},\cdots,x_n,u\varepsilon,x_{n+2},\cdots,x_{n+r}),
\end{equation*}
in $X$, where $\varepsilon\in\fo\setminus\{0\}$ is fixed; let $\mC=\overline{C}$ be the Zariski closure in $\mX$, and let $\mC_0=\mC\cap \mU_{\cA}$. Then, as before, we verify that $C$ is minimal and has class $\cP$, meets $D_\cA$ precisely in $x$, and the point
\begin{equation*}
	\pi(0,\cdots,0,x_{t+1},\cdots,x_n,\varepsilon,x_{n+2},\cdots,x_{n+r})
\end{equation*}
lifts to a regular point of $\mC_0(\fo)$, whence 
\begin{equation*}
	\alpha_v(x,\mU_{\cA}\cap U_{\sigma_t};L)\leq\alpha_v(x,\mC_0\cap U_{\sigma_t};L)=\alpha_v(x,\mC_0;L)=\delta_\cP.
\end{equation*}
The proof is thus completed.
\end{proof}

\bibliographystyle{plain}

\end{document}